\newtheorem{theorem}{Theorem}[section]
\newtheorem{corollary}[theorem]{Corollary}
\newtheorem{lemma}[theorem]{Lemma}
\theoremstyle{definition}
\theoremstyle{remark}
\newtheorem{remark}[theorem]{Remark}
\newcommand{\bz}{\mathbb{Z}}
\newcommand{\bc}{\mathbb{C}}
\newcommand{\bt}{\mathbb{T}}
\newcommand{\Iidentity}{\hbox{\upshape \small1\kern-3.3pt\normalsize1}}
\begin{document}

\title{\bf\vspace{-39pt} Filters and Matrix Factorization}

\author{Palle E. T. Jorgensen \\ \small Department of Mathematics, The University of Iowa, \\
\small Iowa City, IA52242, USA  \\ \small jorgen@math.uiowa.edu \\
\\
Myung-Sin Song \\ \small Department of Mathematics and Statistics, Southern Illinois University Edwardsville\\
\small Edwardsville, IL62026, USA  \\ \small msong@siue.edu }

\date{}

\markboth{\footnotesize \rm \hfill P. E. T. Jorgensen AND M.-S. Song \hfill}
{\footnotesize \rm \hfill MATRIX FACTORIZATION \hfill}

\maketitle


\begin{abstract}
We give a number of explicit matrix-algorithms for analysis/synthesis in 
multi-phase filtering; i.e., the operation on discrete-time signals which 
allow a separation into frequency-band components, one for each of the ranges 
of bands, say $N$, starting with low-pass, and then corresponding filtering in 
the other band-ranges. If there are $N$ bands, the individual filters will be 
combined into a single matrix action; so a representation of the combined 
operation on all $N$ bands by an $N \times N$ matrix, where the corresponding 
matrix-entries are periodic functions; or their extensions to functions of a 
complex variable. Hence our setting entails a fixed $N \times N$ matrix over a 
prescribed algebra of functions of a complex variable. In the case of 
polynomial filters, the factorizations will always be finite. A novelty here 
is that we allow for a wide family of non-polynomial filter-banks.

Working modulo $N$ in the time domain, our approach also allows for a natural 
matrix-representation of both down-sampling and up-sampling. The implementation 
encompasses the combined operation on input, filtering, down-sampling, 
transmission, up-sampling, an action by dual filters, and synthesis, merges 
into a single matrix operation. Hence our matrix-factorizations break down the 
global filtering-process into elementary steps. To accomplish this, we offer a 
number of adapted matrix factorization-algorithms, such that each factor in 
our product representation implements in a succession of steps the filtering 
across pairs of frequency-bands; and so it is of practical significance in 
implementing signal processing, including filtering of digitized images. Our 
matrix-factorizations are especially useful in the case of the processing a 
fixed, but large, number of bands. 
\vspace{5mm} \\

\noindent {\it Key words and phrases} : Signals, image processing, algorithms, lifting, matrix factorization, Hilbert space, numerical methods, Fourier analysis.
\vspace{3mm}\\
\noindent {\it 2000 AMS Mathematics Subject Classification} --- Primary 18A32, 42C40, 46M05, 47B10, 60H05, 62M15, 65T60.

\end{abstract}

\section{Introduction}
\label{sec:0}
Our purpose is to establish factorization of matrices $M_{N}(\mathcal{A})$ 
over certain rings $\mathcal{A}$ of functions, among them the ring of 
polynomials, and the  $L^{\infty}$ functions on the circle group $\mathbb{T}$. 
An equivalent 
formulation is the study of functions on $\mathbb{T}$ which take values in the 
$N \times N$ scalar matrices. The general setting is as follows: Fix $N$, and 
consider the group $SL_{N}(\mathcal{A})$ where the ``$S$" is for determinant 
$= 1$. The object is then to factor arbitrary elements in 
$SL_{N}(\mathcal{A})$ as alternating products of upper and lower triangular 
matrix functions; equivalently, upper and lower triangular elements in 
$M_{N}(\mathcal{A})$ with the constant $1$ in the diagonal.

In digital signal or image-processing one makes use of subdivisions of various 
families of signals into frequency bands. This is of relevance in modern-day 
wireless signal and image processing, and the choice of a number $N$ of 
frequency bands may vary from one application to the next.

There is a certain representation theoretic framework which has proved 
successful: one builds a representation of the basic operations on signals, 
filtering, down-sampling (in the complex frequency variable), up-sampling, and 
dual filter. These operations get represented by a system of operators in 
Hilbert spaces of states, say $\mathcal{H}$.

A multiresolution (see Fig. 1) then takes the form of a family of closed 
subspaces in $\mathcal{H}$.  In this construction, ``non-overlapping frequency 
bands" correspond to orthogonal subspaces in $\mathcal{H}$; or equivalently to 
systems of orthogonal projections. Since the different frequency bands must 
exhaust the signals for the entire system, one looks for orthogonal 
projections which add to the identity operator in $\mathcal{H}$. This leads 
to the study of certain representations of the Cuntz algebra $\mathcal{O}_N$, 
details below. Since time/frequency-analysis is non-commutative, one is 
further faced with a selection of special families of commuting orthogonal 
projections. When these iteration schemes (repeated subdivision sequences) are 
applied to the initial generators, one arrives at new bases and frames; and, 
in other applications, to wavelet families as recursive scheme. 

Our study of iterated matrix-factorizations are motivated by such questions 
from signal processing, and arising in multi-resolution analyses. In this 
case, elements in the group $SL_{N}(\mathcal{A})$ of matrix-functions act on 
vector-functions $f$ in a complex frequency variable, where the components in 
$f$ correspond to a specified system of $N$ frequency-bands. When a 
matrix-factorization is established, then the action of the respective upper 
and lower triangular elements in $M_{N}(\mathcal{A})$ are especially simple, 
in that a lower triangular filter filters a low band, and then adds it to one 
of the higher bands; and similarly for the action of upper triangular matrix 
functions.

Our analysis depend on a certain representation of the Cuntz algebra 
$\mathcal{O}_N$,  where $\mathcal{O}_N$ is an algebra generated by the basic 
operations on signal representations, filtering, down-sampling (in the complex 
frequency variable), up-sampling, and dual filter; see Fig 1. 




\[
  \sum_{n}b_{n}z^{nN}=b_{0}+b_{1}z^{N}+b_{2}z^{2N}+\cdots \text{ ;}
\]
so
\[
  c_{n}=
  \begin{cases}
    b_{n/N}  \quad \text{if } N | n \\
    0   \quad \text{if } N \nmid n
  \end{cases}
\]


\[
  \frac{1}{N}\sum_{w \in \mathbb{T}, w^{N}=z}b_{n}w^{n}=b_{0}+b_{N}z
  +b_{2N}z^{2}+\cdots
\]



\section{Factorization Algorithm}
\label{sec:2}

In order to illustrate our use of representations of the Cuntz algebra 
$O_{N}$ in algorithms for factorization, we begin with the case of $N=2$. The 
skeleton of these algorithms has three basic steps which we now outline.

\subsection*{The Algorithm}
Given
\[
  \begin{pmatrix}
    A & B \\
    C & D
  \end{pmatrix}
  \in SL_{2}(\mathcal{F})
\]
where $\mathcal{F}$ is some fixed ring of functions defined on a subset
$\Omega \subset \mathbb{C}$ such that $\mathbb{T} \subset \Omega$.

\subsubsection*{Step 1:}
Given
\[
  \mathcal{A}=
  \begin{pmatrix}
    A & B \\
    C & D
  \end{pmatrix},
  \quad AD-BC\equiv 1 \quad \text{on $\mathbb{T}$,}
\]
and set
\begin{equation}
\label{eq:2.1}
  \mathcal{A}(z^{2})
  \begin{pmatrix}
    1 \\
    z
  \end{pmatrix}
  =
  \begin{pmatrix}
    A(z^{2})+zB(z^{2}) \\
    C(z^{2})+zD(z^{2})
  \end{pmatrix}.
\end{equation}
Let $S_{i}$, $i=0,1$ be  

\begin{equation}
\label{eq:2.2}
  \begin{cases}
    S_{0}f(z)=f(z^{2}) \\
    S_{1}f(z)=zf(z^{2})
  \end{cases}
\end{equation}
For the corresponding adjoint operators we therefore get:
\begin{equation}
\label{eq:2.3}
  \begin{cases}
    S_{0}^{*}f(z)=\frac{1}{2}\sum_{\omega^{2}=z}f(\omega) \\
    S_{1}^{*}f(z)=\frac{1}{2}\sum_{\omega^{2}=z}\overline{\omega}f(\omega)
 \end{cases}
\end{equation}
where the summation in (\ref{eq:2.2}), (\ref{eq:2.3}) are over points
$z, \omega \in \mathbb{T}$.

Then $(S_{i})_{i=0,1}$ are isometries in $L^{2}(\mathbb{T})$, and
$S_{i}^{*}S_{j}=\delta_{i,j}I$, $\sum_{i=0}^{1}S_{i}S_{j}^{*}=I$ where $I$
denotes the identity operator in the Hilbert space $L^{2}(\mathbb{T})$.
We will want $\mathcal{F}$ to be a ring of meromorphic functions, such that
they are determined by their values on
$\mathbb{T}=\{z \in \mathbb{C}, |z|=1\}$;
or we are simply working with functions on $\mathbb{T}$.

\subsubsection*{Step 2:}
Find functions $L$ such that
\begin{equation}
\label{eq:2.4}
  \begin{pmatrix}
    l & 0 \\
    L & 1
  \end{pmatrix}
  \mathcal{A}_{new}
  =\mathcal{A}.
\end{equation}
Solution: Apply (\ref{eq:2.4}) to
\[
  \begin{pmatrix}
    1 \\
    z
  \end{pmatrix},
\]
and set
\[
  \mathcal{A}_{new}(z^{2})
  \begin{pmatrix}
    1 \\
    z
  \end{pmatrix}
  =
  \begin{pmatrix}
    f_{0}(z) \\
    f_{1}(z)
  \end{pmatrix};
\]
then
\begin{equation}
\label{eq:2.5}
  \begin{cases}
    f_{0}=A(z^{2})+zB(z^{2}) \\
    L(z^{2})f_{0}(z)+f_{1}(z)=C(z^{2})+zD(z^{2}).
  \end{cases}
\end{equation}
Apply $S_{i}^{*}$, $i=0,1$, to (\ref{eq:2.5})
\begin{equation}
\label{eq:2.6}
  \begin{cases}
    S_{0}^{*}f_{0}=A, \quad S_{1}^{*}f_{0}=B \\
    LS_{0}^{*}f_{0}+S_{0}^{*}f_{1}=C \\
    LS_{1}^{*}f_{0}+S_{1}^{*}f_{1}=D. \\
    \Rightarrow L=\frac{C-S_{0}^{*}f_{1}}{A}; \quad
    L=\frac{D-S_{1}^{*}f_{1}}{B}.
  \end{cases}
\end{equation}

\begin{corollary}
  $A(S_{1}^{*}f_{1})-B(S_{0}^{*}f_{1})=1$.
\end{corollary}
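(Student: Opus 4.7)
The plan is to derive the identity directly from the two expressions for $L$ furnished by the last line of (\ref{eq:2.6}), using only the determinant condition $AD-BC=1$.

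First I would equate the two representations of $L$ obtained in (\ref{eq:2.6}):
\[
\frac{C-S_{0}^{*}f_{1}}{A} \;=\; \frac{D-S_{1}^{*}f_{1}}{B}.
\]
Cross-multiplying (valid on the complement of the zero sets of $A$ and $B$, then extended by continuity / as an identity in the ring $\mathcal{F}$) yields
\[
B\bigl(C-S_{0}^{*}f_{1}\bigr)\;=\;A\bigl(D-S_{1}^{*}f_{1}\bigr),
\]
which, after distributing and rearranging, becomes
\[
A\bigl(S_{1}^{*}f_{1}\bigr)-B\bigl(S_{0}^{*}f_{1}\bigr)\;=\;AD-BC.
\]
Since $\mathcal{A}\in SL_{2}(\mathcal{F})$ gives $AD-BC\equiv 1$, the corollary follows.

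The only subtlety I foresee is ensuring the division step in (\ref{eq:2.6}) makes sense ring-theoretically, i.e. that $A$ and $B$ are not simultaneously required to vanish. However, this is already implicit in Step 2 of the algorithm: the quantity $L$ is well defined as a function on $\mathbb{T}$ (or on $\Omega$) because the relation $B(C-S_{0}^{*}f_{1})=A(D-S_{1}^{*}f_{1})$ can be obtained without division at all — directly from the second equation in (\ref{eq:2.5}) by applying $BS_{0}^{*}$ and $AS_{1}^{*}$ and using the isometry relations $S_{i}^{*}S_{j}=\delta_{i,j}I$ from the Cuntz representation. I would therefore present the argument in this division-free form, so that the conclusion $A(S_{1}^{*}f_{1})-B(S_{0}^{*}f_{1})=AD-BC=1$ holds as a pointwise identity on all of $\mathbb{T}$, regardless of the vanishing loci of $A$ and $B$. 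This also makes clear that the corollary is really a compatibility condition: the two expressions for $L$ in (\ref{eq:2.6}) are forced to agree precisely because $\mathcal{A}$ has determinant $1$.
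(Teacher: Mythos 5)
Your proposal is correct and follows essentially the same route as the paper, which (in the displayed computation following Step 4) likewise equates the two expressions $L=\frac{C-S_{0}^{*}f_{1}}{A}=\frac{D-S_{1}^{*}f_{1}}{B}$, cross-multiplies to $A(D-S_{1}^{*}f_{1})=B(C-S_{0}^{*}f_{1})$, and invokes $AD-BC\equiv 1$. Your division-free variant --- multiplying the relations $LA+S_{0}^{*}f_{1}=C$ and $LB+S_{1}^{*}f_{1}=D$ from (\ref{eq:2.6}) by $B$ and $A$ respectively and subtracting to eliminate $L$ --- is a sensible tightening that avoids any issue with the zero sets of $A$ and $B$, but it is the same argument in substance.
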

\begin{proof}
  Consider (\ref{eq:2.6}) with $det \mathcal{A}=1$.
  \[
    \mathcal{A}=
    \begin{pmatrix}
      A & B \\
      C & D
    \end{pmatrix}
  \]
  with $AD-BC=1$.
  So
  \[
    \begin{pmatrix}
      A & B \\
      S_{0}^{*}f_{1} & S_{1}^{*}f_{1}
    \end{pmatrix}
    \in SL_{2}(\mathcal{F}).
  \]
  We now assume $S_{i}\mathcal{F} \subset \mathcal{F}$, and
  $S_{i}^{*}\mathcal{F} \subset \mathcal{F}$, for all $i=0,1$.
\end{proof}

\subsubsection*{Step 3:}
Having form $L$, from (\ref{eq:2.4}) we get
\[
  \mathcal{A}_{new} =
  \begin{pmatrix}
    l & 0 \\
    -L & 1
  \end{pmatrix}
  \begin{pmatrix}
      A & B \\
      C & D
  \end{pmatrix}
  =
  \begin{pmatrix}
      A & B \\
      -LA+C & -LB+D
  \end{pmatrix}
  =
  \begin{pmatrix}
      A & B \\
      S_{0}^{*}f_{1} & S_{1}^{*}f_{1}
  \end{pmatrix}
\]

\subsubsection*{Step 4:}
\[
  \begin{pmatrix}
    l & U \\
    0 & 1
  \end{pmatrix}
  \mathcal{A}_{up}
  =\mathcal{A}_{new}.
\]
Set
\[
  \mathcal{A}_{up}(z^{2})
  \begin{pmatrix}
    1 \\
    z
  \end{pmatrix}
  =
  \begin{pmatrix}
    g_{0}(z) \\
    g_{1}(z)
  \end{pmatrix};
\]
and we get
\[
  \begin{cases}
    g_{0}(z)+U(z^{2})g_{1}(z)=A(z^{2})+B(z^{2})z \\
    g_{1}(z)=(S_{0}^{*}f_{1})(z^{2})+(S_{1}^{*}f_{1})(z^{2})z
  \end{cases}.
\]
Apply $S_{i}^{*}$, $i=0,1$
\[
  \Rightarrow
  \begin{cases}
  S_{0}^{*}g_{0}+US_{0}^{*}g_{1}=A, \quad S_{1}^{*}g_{0}+US_{1}^{*}g_{1}=B \\
  S_{0}^{*}g_{1}=S_{0}^{*}f_{1}, \quad S_{1}^{*}g_{1}=S_{1}^{*}f_{1} \\
  \Rightarrow U=\frac{A-S_{0}^{*}g_{0}}{S_{0}^{*}f_{1}} \quad \text{and} \quad
  U=\frac{B-S_{1}^{*}g_{0}}{S_{1}^{*}f_{1}}
  \end{cases}
\]
and continue.

\[
  S_{0}^{*}f_{0}=A, \quad S_{1}^{*}f_{0}=B
\]
\[
  LS_{0}^{*}f_{0}+S_{0}^{*}f_{0}=C
\]
\[
  LS_{1}^{*}f_{0}+S_{1}^{*}f_{0}=D
\]
\[
  L=\frac{C-S_{0}^{*}f_{1}}{A}=\frac{D-S_{1}^{*}f_{1}}{B}
\]
\[
  A(D-S_{1}^{*}f_{1})=B(C-S_{0}^{*}f_{1})
\]
\[
  1=AS_{1}^{*}f_{1}-BS_{0}^{*}f_{1}
\]
\[
  A_{new}^{(1)} = 
  \begin{pmatrix}
    1 & 0 \\
    -L & 1
  \end{pmatrix}
  \begin{pmatrix}
    A & B \\
    C & D
  \end{pmatrix} 
  =
  \begin{pmatrix}
    A & B \\
    -LA+C & -LB+D
  \end{pmatrix}
  =
  \begin{pmatrix}
    A & B \\
    S_{0}^{*}f_{1} & S_{1}^{*}f_{1}
  \end{pmatrix} 
\]
so
\[
  \begin{pmatrix}
    1 & 0 \\
    L & 1
  \end{pmatrix}
  \begin{pmatrix}
    A & B \\
    S_{0}^{*}f_{1} & S_{1}^{*}f_{1}
  \end{pmatrix}
  =
  \begin{pmatrix}
    A & B \\
    C & D
  \end{pmatrix}.
\]


\section{Factorization Cases}
\label{sec:2}
In the infinite-dimensional group 
$SL_{2}(L^{\infty}(\mathbb{T}))$, consider elements 
$\mathcal{A}$ with factorization as in (\ref{eq:3.1}):
\[
  \mathcal{A}=
  \begin{pmatrix}
    A & B \\
    C & D
  \end{pmatrix}
\]

\begin{equation}
\label{eq:3.1}
  \mathcal{A}=
  \begin{pmatrix}
    1 & 0 \\
    L & 1
  \end{pmatrix}
  \mathcal{A}^{(1)},
  \quad L \in L^{\infty}(\mathbb{T}), \quad \mathcal{A}^{(1)} \in
  SL_{2}(L^{\infty}(\mathbb{T}))
\end{equation}
Optimal
\[
  \mathcal{A}^{(1)}(z^{2})
  \begin{pmatrix}
    1 \\
    z
  \end{pmatrix}
  =
  \begin{pmatrix}
    f_{0} \\
    f_{1}
  \end{pmatrix}
\]
\begin{equation}
\label{eq:3.2}
  \begin{cases}
    A(z^{2})+zB(z^{2})=f_{0}(z) \\
    C(z^{2})+zD(z^{2})=L(z^{2})f_{0}(z)+f_{1}(z)
  \end{cases}
  \quad \{S_{i}\}_{i=0} \in REP(\mathcal{O}_{2}, L^{2}(\mathbb{T}))
\end{equation}

\begin{equation}
\label{eq:3.3}
  \begin{cases}
    S_{0}^{*}f_{0}=A, S_{1}^{*}f_{0}=B \\
    LS_{0}^{*}f_{0}+S_{0}^{*}f_{1}=C \\
    LS_{1}^{*}f_{0}+S_{1}^{*}f_{1}=D
  \end{cases}
\end{equation}
\begin{equation}
\label{eq:3.4}
  \iff
  \begin{cases}
    S_{0}^{*}f_{1}=C-LA  \\
    S_{1}^{*}f_{1}=D-LB
  \end{cases}
\end{equation}
\begin{equation}
\label{eq:3.5}
  \Rightarrow
  f_{1}=(S_{0}S_{0}^{*}+S_{1}S_{1}^{*})f_{1}=S_{0}(C-LA)+S_{1}(D-LB)
\end{equation}
\[
  \begin{pmatrix}
    A & B \\
    C & D
  \end{pmatrix}
  \longrightarrow
  \begin{pmatrix}
    A & B \\
    S_{0}^{*}f_{1} &  S_{1}^{*}f_{1}
  \end{pmatrix}
\]

Since $S_{i}$ is isometric for $i=1,2$.
\begin{equation}
\label{eq:3.6}
  \|f_{1}\|^{2}=\|C-LA\|^{2}+\|D-LB\|^{2} \quad \text{where $\|\cdot\|$ is
  the $L^{2}(\mathbb{T})-$norm.}
\end{equation}

\begin{equation}
\label{eq:3.7}
  \langle u, v \rangle=\int_{\mathbb{T}}\overline{u}v \quad \text{with
  respect to Haar measure on $\mathbb{T}$.}
\end{equation}
So any functions
\begin{equation}
\label{eq:3.8}
  \mathcal{A}=
  \begin{pmatrix}
    1 & 0 \\
    L & 1
  \end{pmatrix}
  \mathcal{A}^{(1)}
\end{equation}
we pick the one with $f_{1}$ attaching its minimum in (\ref{eq:3.6})
\begin{equation}
\label{eq:3.7-1}
  \inf\{ (\ref{eq:3.6})| \text{factorization } (\ref{eq:3.8}) \text{ holds}\}
\end{equation}
Calculating $L$ on $\mathcal{A}$
\[
  L_{M}(\epsilon)=L+\epsilon M, \quad M \in L^{\infty}(\mathbb{T}).
\]

\begin{equation}
\label{eq:3.8-1}
  \frac{d}{d\epsilon}\bigg\vert_{\epsilon=0} (\ref{eq:3.6})=0 \quad
  \text{ at a minimum.}
\end{equation}

\begin{align*}
  &\iff \\
  &\langle MA, C-LA \rangle + \langle C-LA, MA \rangle +
  \langle MB, D-LB \rangle + \langle D-LB, MB \rangle \\
  &=Re(\langle MA, C-LA \rangle+\langle MB, D-LB \rangle)=0 \quad
  \forall M \in L^{\infty}(\mathbb{T}).
\end{align*}

\begin{equation}
\label{eq:3.9-1}
  \overline{A}(C-LA)+\overline{B}(D-LB)=0 \quad \text{pointwise a. e. on
  $\mathbb{T}$.}
\end{equation}
Set $det\mathcal{A}=1$,
\[
  \|A\|^{2}+\|B\|^{2}>0 \quad \text{a. e. on $\mathbb{T}$.}
\]
So
\begin{equation}
\label{eq:3.10}
  L= \frac{\overline{A}C+\overline{B}D}{|A|^{2}+|B|^{2}} \quad
  \text{pointwise a. e. $\mathbb{T}$.}
\end{equation}
Solving for matrices $\mathcal{A}^{(1)}$ in (\ref{eq:3.8-1}), we get
\[
  \mathcal{A}^{(1)}=
  \begin{pmatrix}
    1 & 0 \\
    -L & 1
  \end{pmatrix}
  \begin{pmatrix}
    A & B \\
    C & D
  \end{pmatrix}
  =
  \begin{pmatrix}
    A & B \\
    C-LA & D-LB
  \end{pmatrix}.
\]
So
\[
  \mathcal{A}=
  \begin{pmatrix}
    1 & 0 \\
    L & 1
  \end{pmatrix}
  \mathcal{A}^{(1)}
\]
With the above $L$ in (\ref{eq:3.10}) we see that
\[
  \mathcal{A}=
  \begin{pmatrix}
    1 & 0 \\
    L & 1
  \end{pmatrix}
  \mathcal{A}^{(1)}
\]
is the \underline{optimal} factorization with a lower matrix as a
left-factor.

\begin{corollary}
\label{C:3.2}
Given
  \[
    \begin{pmatrix}
      A & B \\
      C & D
    \end{pmatrix}
    \in GL_{2}(L^{\infty}(\mathbb{T}));
  \]
then the optimal solution (\ref{eq:3.10}) to the factorization problem
\begin{equation}
\label{eq:3.17}
  \begin{pmatrix}
    A & B \\
    C & D
  \end{pmatrix}
  =
  \begin{pmatrix}
    1 & 0 \\
    L & 1
  \end{pmatrix}
  \begin{pmatrix}
    A & B \\
    S_{0}^{*}f_{1} & S_{1}^{*}f_{1}
  \end{pmatrix}
\end{equation}
has the matrix
\[
  \begin{pmatrix}
    A & B \\
    S_{0}^{*}f_{1} & S_{1}^{*}f_{1}
  \end{pmatrix}
\]
on the right hand side in (\ref{eq:3.17}) orthogonal, i.e.,
\begin{equation}
\label{eq:3.18}
  \overline{A}(S_{0}^{*}f_{1})+\overline{B}(S_{1}^{*}f_{1}) \equiv 0
  \quad \text{on $\mathbb{T}$.}
\end{equation}
\end{corollary}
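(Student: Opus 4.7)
The plan is to observe that the orthogonality identity (\ref{eq:3.18}) is simply a rewriting of the first-order (pointwise) optimality condition (\ref{eq:3.9-1}) already used to derive the explicit formula for the optimal $L$; once this is recognized, the proof reduces to a direct substitution.

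First I would recall that the bottom row of the right-hand matrix in (\ref{eq:3.17}) is, by (\ref{eq:3.4}),
\[
  S_{0}^{*}f_{1}=C-LA,\qquad S_{1}^{*}f_{1}=D-LB.
\]
Substituting these into the left side of (\ref{eq:3.18}) gives
\[
  \overline{A}(S_{0}^{*}f_{1})+\overline{B}(S_{1}^{*}f_{1})
  =\overline{A}C+\overline{B}D-\bigl(|A|^{2}+|B|^{2}\bigr)L,
\]
pointwise a.e.\ on $\mathbb{T}$. Hence the orthogonality assertion is equivalent to the pointwise identity
\[
  \bigl(|A|^{2}+|B|^{2}\bigr)L=\overline{A}C+\overline{B}D,
\]
which is exactly the formula (\ref{eq:3.10}) for the optimal $L$.

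The remaining point is to verify that the denominator $|A|^{2}+|B|^{2}$ does not vanish on a set of positive Haar measure, so that (\ref{eq:3.10}) is well-defined in $L^{\infty}(\mathbb{T})$ and the division above is legitimate. Since the matrix lies in $GL_{2}(L^{\infty}(\mathbb{T}))$, its determinant $AD-BC$ is a.e.\ non-zero; in particular, $A$ and $B$ cannot simultaneously vanish on a set of positive measure, giving $|A|^{2}+|B|^{2}>0$ a.e., as was already noted in the derivation of (\ref{eq:3.10}). With this in hand, the substitution above closes the argument.

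I expect no serious obstacle: the statement is essentially a geometric reinterpretation of the variational condition (\ref{eq:3.8-1})--(\ref{eq:3.9-1}). The only delicate point is the a.e.\ non-vanishing of $|A|^{2}+|B|^{2}$ under the $GL_{2}$ hypothesis (rather than $SL_{2}$), which I would address briefly as indicated. It may also be worth pointing out, as a concluding remark, that (\ref{eq:3.18}) says that in the pointwise Hermitian inner product on $\mathbb{C}^{2}$ the two columns $(A,S_{0}^{*}f_{1})^{t}$ and $(B,S_{1}^{*}f_{1})^{t}$ satisfy the orthogonality relation between the \emph{first} row $(A,B)$ and the \emph{second} row $(S_{0}^{*}f_{1},S_{1}^{*}f_{1})$ of the optimal right factor; this is the precise sense in which the optimal factorization produces an ``orthogonal'' residual matrix.
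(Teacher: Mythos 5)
Your proposal is correct and follows essentially the same route as the paper's own proof: substituting $S_{0}^{*}f_{1}=C-LA$ and $S_{1}^{*}f_{1}=D-LB$ into $\overline{A}(S_{0}^{*}f_{1})+\overline{B}(S_{1}^{*}f_{1})$ and observing that the result vanishes precisely because $L$ is given by the formula (\ref{eq:3.10}). Your added remark on the a.e.\ non-vanishing of $|A|^{2}+|B|^{2}$ under the $GL_{2}$ hypothesis is a small but welcome clarification that the paper only states in passing.
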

\begin{proof}
When the function $L$ in (\ref{eq:3.10}) is used in the computation of
\[
  \begin{pmatrix}
    A & B \\
    S_{0}^{*}f_{1} & S_{1}^{*}f_{1}
  \end{pmatrix},
\]
we see that for any $z \in \mathbb{T}$,
$((S_{0}^{*}f_{1})(z), (S_{1}^{*}f_{1})(z))$ in $\mathbb{C}$ is in the
orthogonal complement of $(A(z), B(z))$; indeed with (\ref{eq:3.10}) we get
\begin{align*}
  &\overline{A}(S_{0}^{*}f_{1})+\overline{B}(S_{1}^{*}f_{1}) \\
  &=\overline{A}\left(C-\frac{\overline{A}C+\overline{B}D}{|A|^{2}+|B|^{2}}A\right)+
  \overline{B}\left(D-\frac{\overline{A}C+\overline{B}D}{|A|^{2}+|B|^{2}}B\right) \\
  &=\overline{A}C+\overline{B}D-(\overline{A}C+\overline{B}D)\equiv 0;
\end{align*}
i.e., a pointwise identity for functions on $\mathbb{T}$.
\end{proof}

\begin{corollary}
\label{C:3.1}
  If $\mathcal{A} \in SU(L^{\infty}(\mathbb{T}))$ (i.e., unitary) then $L$ in
(\ref{eq:3.10}) is $0$ and so $\mathcal{A}=\mathcal{A}^{(1)}$ so the
factorization steps.
\end{corollary}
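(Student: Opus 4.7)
The plan is to exploit unitarity directly in the closed form (\ref{eq:3.10}) for $L$. Writing $\mathcal{A} \in SU(L^\infty(\mathbb{T}))$ means $\mathcal{A}^*\mathcal{A} = \mathcal{A}\mathcal{A}^* = I$ as matrices of $L^\infty$-functions on $\mathbb{T}$, evaluated pointwise a.e. The two relations I intend to use are $|A|^2 + |B|^2 = 1$ (from the $(1,1)$ entry of $\mathcal{A}\mathcal{A}^*$) and $A\overline{C} + B\overline{D} = 0$ (from the off-diagonal entry of $\mathcal{A}\mathcal{A}^*$).

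First I would take the complex conjugate of the off-diagonal relation to obtain $\overline{A}C + \overline{B}D = 0$ pointwise a.e. on $\mathbb{T}$. This is precisely the numerator appearing in formula (\ref{eq:3.10}) for the optimal $L$. Combined with the non-vanishing denominator guaranteed by $|A|^2 + |B|^2 = 1 > 0$ a.e., I conclude $L \equiv 0$ in $L^\infty(\mathbb{T})$.

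Second, substituting $L = 0$ into the explicit description
\[
  \mathcal{A}^{(1)} =
  \begin{pmatrix}
    1 & 0 \\
    -L & 1
  \end{pmatrix}
  \begin{pmatrix}
    A & B \\
    C & D
  \end{pmatrix}
  =
  \begin{pmatrix}
    A & B \\
    C - LA & D - LB
  \end{pmatrix}
\]
collapses the left factor to the identity, so $\mathcal{A}^{(1)} = \mathcal{A}$. Consequently the lifting step produces no reduction: the algorithm terminates (or stalls) at the first stage, which is exactly the claimed statement that the factorization steps degenerate.

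There is no real obstacle here; the only point that warrants a brief remark is consistency with Corollary \ref{C:3.2}: the orthogonality relation (\ref{eq:3.18}) derived there forces $\overline{A}(S_0^* f_1) + \overline{B}(S_1^* f_1) \equiv 0$, which upon noting $S_0^* f_1 = C - LA = C$ and $S_1^* f_1 = D - LB = D$ recovers the unitarity identity $\overline{A}C + \overline{B}D = 0$, confirming that the unitary case is the fixed point of one step of the algorithm. I would include this observation as a short sentence at the end to tie the two corollaries together.
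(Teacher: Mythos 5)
Your argument is correct and is essentially the paper's own proof: both derive $\overline{A}C+\overline{B}D=0$ and $|A|^{2}+|B|^{2}=1$ from unitarity (orthogonality and normalization of the rows) and read off $L=0$ from the closed form (\ref{eq:3.10}), whence $\mathcal{A}^{(1)}=\mathcal{A}$. Your closing cross-check against Corollary \ref{C:3.2} is a harmless addition but not needed.
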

\begin{proof}
\[
  \mathcal{A}=
  \begin{pmatrix}
    A & B \\
    C & D
  \end{pmatrix},
\]
so unitary makes that the rows are orthogonal $\overline{A}C+\overline{B}D=0$
in the inner product on $\mathbb{C}^{2}$
\[
  \langle z, w \rangle = \overline{z_{1}}w_{1}+\overline{z_{2}}w_{2}
\]
and $|A|^{2}+|B|^{2}=1$.
\end{proof}
\begin{equation}
\label{eq:3.10-1}
  \mathcal{A}^{(1)}=
  \begin{pmatrix}
    A & B \\
    S_{0}^{*}f_{1} & S_{1}^{*}f_{1}
  \end{pmatrix}
\end{equation}
using $\longrightarrow$
\[
  \begin{pmatrix}
    S_{0}^{*}g_{0} & S_{1}^{*}g_{0} \\
    S_{0}^{*}f_{1} & S_{1}^{*}f_{1}
  \end{pmatrix}.
\]
Note this using the repeated on any
$\mathcal{A}^{(1)} \in SL_{2}(L^{\infty}(\mathbb{T}))$ each time pick $L$ such
that the infimum in (\ref{eq:3.6}) is attained.

With the same argument, we factor matrix
\[
  \begin{pmatrix}
    1 & U \\
    0 & 1
  \end{pmatrix}
  \quad U \in L^{\infty}(\mathbb{T})
\]
\begin{equation}
\label{eq:3.11}
  \mathcal{A}=
  \begin{pmatrix}
    1 & U \\
    0 & 1
  \end{pmatrix}
  \mathcal{A}^{(2)}, \quad \mathcal{A}^{(2)} \in SL_{2}(L^{\infty}(\mathbb{T})).
\end{equation}
Set
\begin{equation}
\label{eq:3.12}
  \begin{pmatrix}
    g_{0} \\
    g_{1}
  \end{pmatrix}
  =\mathcal{A}^{(2)}(z^{2})
  \begin{pmatrix}
    1 \\
    z
  \end{pmatrix}
\end{equation}
\[
  \begin{cases}
    A(z^{2})+zB(z^{2})=g_{0}+U(z^{2})g_{1} \\
    C(z^{2})+zD(z^{2})=g_{1}
  \end{cases}
\]
\begin{equation}
\label{eq:3.13}
  \begin{cases}
    A=S_{0}^{*}g_{0}+US_{0}^{*}g_{1} \\
    B=S_{1}^{*}g_{0}+US_{1}^{*}g_{1} \\
    C=S_{0}^{*}g_{1}, D=S_{1}^{*}g_{1}
  \end{cases}
  \quad S_{0}^{*}g_{0} = A-UC, \quad S_{1}^{*}g_{0}=B-UD
\end{equation}
\[
  g_{0}=S_{0}S_{0}^{*}g_{0}+S_{1}S_{1}^{*}g_{1}
  =S_{0}(A-UC)+S_{1}(B-UD)
\]
\begin{equation}
\label{eq:3.14}
  \|g_{0}\|^{2}=\|A-UC\|^{2}+\|B-UD\|^{2}
\end{equation}
such that (\ref{eq:3.11}) holds.
\[
  \begin{pmatrix}
    A & B \\
    C & D
  \end{pmatrix}
  \longrightarrow
  \begin{pmatrix}
    A-UC & B-UD \\
    C & D
  \end{pmatrix},
  \quad S_{0}^{*}g_{0} = A-UC, \quad S_{1}^{*}g_{0}=B-UD.
\]
Pick $U$ such that
\[
  \overline{C}(A-UC)+\overline{D}(B-UD)=0
\]
\begin{equation}
\label{eq:3.15}
  U=\frac{\overline{C}A+\overline{D}B}{|C|^{2}+|D|^{2}}
\end{equation}
\begin{equation}
\label{eq:3.16}
  \mathcal{A}^{(2)}=
  \begin{pmatrix}
    S_{0}^{*}g_{0} & S_{1}^{*}g_{0} \\
    C & D
  \end{pmatrix}
\end{equation}
in (\ref{eq:3.11}).

If
\[
  \mathcal{A}=
  \begin{pmatrix}
    A & B \\
    C & D
  \end{pmatrix}
  \in SL_{2}(L^{\infty}(\mathbb{T}))
\]
then
\[
  U=\frac{\overline{C}A+\overline{D}B}{|C|^{2}+|D|^{2}} =0.
\]
See (\ref{eq:3.15}) so the factorization
\[
  \mathcal{A}=
  \begin{pmatrix}
    1 & U \\
    0 & 1
  \end{pmatrix}
  \mathcal{A}^{(2)}
\]
in (\ref{eq:3.11}) is then, $U=0 \Rightarrow \mathcal{A}=\mathcal{A}^{(2)}$.
Then following factorization results:
\[
  \mathcal{A}=(\prod(lower)(upper))SL_{2}(L^{\infty}(\mathbb{T}))
\]
\begin{equation}
\label{eq:3.16-1}
  \begin{pmatrix}
    A & B \\
    C & D
  \end{pmatrix}
  \underset{\text{factor out lower matrix on the left}}{\longrightarrow}
  \begin{pmatrix}
    A & B \\
    S_{0}^{*}f_{1} & S_{1}^{*}f_{1}
  \end{pmatrix}
\end{equation}
\[
  \underset{\text{factor out upper matrix on the left}}{\longrightarrow}
  \begin{pmatrix}
    S_{0}^{*}g_{0} & S_{1}^{*}g_{0} \\
    S_{0}^{*}f_{1} & S_{1}^{*}f_{1}
  \end{pmatrix}.
\]
Or equivalently,
\begin{equation}
\label{eq:3.17}
  \begin{pmatrix}
    A & B \\
    C & D
  \end{pmatrix}
  =
  \begin{pmatrix}
    1 & 0 \\
    L & 1
  \end{pmatrix}
  \begin{pmatrix}
    1 & U \\
    0 & 1
  \end{pmatrix}
  \begin{pmatrix}
    S_{0}^{*}g_{0} & S_{1}^{*}g_{0} \\
    S_{0}^{*}f_{1} & S_{1}^{*}f_{1}
  \end{pmatrix}.
\end{equation}

\begin{corollary}
\label{C:3.2}
Consider $\mathcal{A}\in SL_{2}(L^{\infty}(\mathbb{T}))$, and the 
factorization
\begin{equation}
\label{eq:3.18}
  \mathcal{A}=
  \begin{pmatrix}
    1 & 0 \\  
    L_{1} & 1
  \end{pmatrix}
  \begin{pmatrix}
    1 & U_{1} \\  
    0 & 1
  \end{pmatrix}
  \cdots 
  \begin{pmatrix}
    1 & 0 \\  
    L_{p} & 1
  \end{pmatrix}
  \begin{pmatrix}
    1 & U_{p} \\  
    0 & 1
  \end{pmatrix}
  \begin{pmatrix}
    S_{0}^{*}g_{0} & S_{1}^{*}g_{0} \\  
    S_{0}^{*}f_{1} & S_{1}^{*}f_{1}
  \end{pmatrix}
\end{equation}
resulting from an iteration of the algorithm from (\ref{eq:3.17}).  Then 
the last factor in (\ref{eq:3.18}) is of diagonal form if and only if the 
following hold:
There are functions $\varphi, \psi \in L^{2}(\mathbb{T})$ such that 
\begin{equation}
\label{eq:3.19}
  g_{0}(z)=\varphi(z^{2}), \quad \text{and} \quad f_{1}(z)=z\psi(z^{2});
\end{equation}
and, in this case, the last factor in (\ref{eq:3.18}) is as follows:
\begin{equation}
\label{eq:3.20}
  \begin{pmatrix}
    S_{0}^{*}g_{0} & S_{1}^{*}g_{0} \\  
    S_{0}^{*}f_{1} & S_{1}^{*}f_{1}
  \end{pmatrix}
=
  \begin{pmatrix}
    \varphi & 0 \\  
    0 & \psi
  \end{pmatrix}.
\end{equation}
\end{corollary}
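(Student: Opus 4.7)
The plan is to reduce the corollary to a direct application of the Cuntz-algebra relations for $\{S_0, S_1\}$ recorded earlier, namely $S_i^{*}S_j = \delta_{i,j} I$ and $S_0 S_0^{*} + S_1 S_1^{*} = I$ on $L^2(\mathbb{T})$. The matrix in question is
\[
\mathcal{M} \;=\; \begin{pmatrix} S_{0}^{*}g_{0} & S_{1}^{*}g_{0} \\ S_{0}^{*}f_{1} & S_{1}^{*}f_{1} \end{pmatrix},
\]
so ``$\mathcal{M}$ is diagonal'' is by definition the pair of identities $S_{1}^{*}g_{0}=0$ and $S_{0}^{*}f_{1}=0$. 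I will show that these are equivalent to $g_0$ lying in the range of $S_0$ and $f_1$ lying in the range of $S_1$, respectively, which is precisely the representation~\eqref{eq:3.19}.

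First I would handle the ``if'' direction, which is the straightforward bookkeeping. Suppose $g_0 = S_0\varphi$ and $f_1 = S_1\psi$ for some $\varphi, \psi \in L^2(\mathbb{T})$ (which is exactly what \eqref{eq:3.19} asserts, since $(S_0\varphi)(z) = \varphi(z^2)$ and $(S_1\psi)(z) = z\psi(z^2)$). Applying $S_i^*$ and invoking $S_i^{*}S_j = \delta_{i,j} I$ then yields entry-by-entry that $S_0^* g_0 = \varphi$, $S_1^* g_0 = 0$, $S_0^* f_1 = 0$, $S_1^* f_1 = \psi$, giving \eqref{eq:3.20}.

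Next I would prove the ``only if'' direction by exploiting the completeness relation $S_0 S_0^{*} + S_1 S_1^{*} = I$. Assuming $\mathcal{M}$ is diagonal, so $S_1^* g_0 = 0$ and $S_0^* f_1 = 0$, I expand
\[
g_0 \;=\; (S_0 S_0^{*} + S_1 S_1^{*})g_0 \;=\; S_0(S_0^{*}g_0), \qquad f_1 \;=\; (S_0 S_0^{*} + S_1 S_1^{*})f_1 \;=\; S_1(S_1^{*}f_1).
\]
Setting $\varphi := S_0^{*}g_0$ and $\psi := S_1^{*}f_1$ and translating back through the definitions of $S_0$ and $S_1$ recovers $g_0(z) = \varphi(z^2)$ and $f_1(z) = z\psi(z^2)$, which is \eqref{eq:3.19}; the diagonal form \eqref{eq:3.20} then follows as in the first direction.

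There is no substantial obstacle here: the entire argument is a two-line manipulation with the Cuntz relations, and both directions reduce to the observation that $S_0 L^2(\mathbb{T})$ and $S_1 L^2(\mathbb{T})$ are the orthogonal ranges of the projections $S_0 S_0^{*}$ and $S_1 S_1^{*}$, respectively. The only point worth flagging in the write-up is that $\varphi$ and $\psi$ automatically belong to $L^2(\mathbb{T})$ because $S_0^{*}$ and $S_1^{*}$ are contractions (being adjoints of isometries), so the representation \eqref{eq:3.19} is well-posed in $L^2$.
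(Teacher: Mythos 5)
Your proof is correct and follows essentially the same route as the paper, which simply cites the Cuntz relations $S_i^{*}S_j=\delta_{i,j}I$ and $\sum_i S_iS_i^{*}=I$; you have merely written out the two directions explicitly (the off-diagonal entries vanish iff $g_0\in S_0L^2(\mathbb{T})$ and $f_1\in S_1L^2(\mathbb{T})$). No gaps; your version is in fact more detailed than the paper's one-line proof.
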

\begin{proof}
This follows from (\ref{eq:3.17}), and the Cuntz-relations:
\begin{equation}
\label{eq:3.21}
  S_{i}^{*}S_{j}=\delta_{i,j}, \quad \sum_{i}S_{i}S_{i}^{*}=I.
\end{equation}

\end{proof}

\subsection{Factorizations}
\label{sec:4.1.1}

We fix a value of $N > 1$ (i.e., the given number of frequency bands), and we 
begin with the formula for a canonical system of $N$ isometries  $S_{i}$ which 
define an associated representation of the Cuntz algebra $O_{N}$.  Said 
differently: The system of isometries $\{S_{i} \}$ satisfies the Cuntz 
relations with reference to the Hilbert space $L^{2}(\mathbb{T})$ where 
$\mathbb{T}$ is the circle group (one-torus) with its normalized invariant 
Haar measure. When the value of $N$ is fixed, then the multi-resolution 
filters will then take the form of $N \times N$ matrix functions; the matrix 
entries might be polynomials, or, more generally, functions from 
$L^{\infty}(\mathbb{T})$. Hence the questions about matrix factorization 
depends on the context. In the case of polynomial entries we will make use of 
degree, but this is not available for the more general case of entries from the 
algebra $L^{\infty}(\mathbb{T})$. In every one of the settings, we develop 
factorization algorithms, and the particular representation of the Cuntz 
algebra will play an important role. 

The standard representation of $O_{N}$, which we will use below, is given by 
the system of isometries $\{S_{j}\}$ as follows:

\begin{equation}
\label{E:3.10}
  (S_{j}\varphi)(z)=f_{j}(z)\varphi(z^{N}).
\end{equation}
\begin{lemma}
\label{L:3.10}
\cite{JoSo10} Let $N\in \bz_{+}$ be given and let $F=(f_{j})_{j\in \bz_{+}}$ be a function
system.  Then $F \in \mathcal{O}\mathcal{F}_{N}$ if and only if the operators
$S_{j}$ \ref{E:3.10}) satisfy
\begin{equation}
\label{eq:3.11}
  S_{j}^{*}S_{k}=\delta_{j,k}I
\end{equation}
\begin{equation}
\label{eq:3.12}
  \sum_{j \in \bz_{N}}S_{j}S_{j}^{*}=I,
\end{equation}
where $I$ denotes the identity operator in $\mathcal{H}=L^{2}(\bt)$.
\end{lemma}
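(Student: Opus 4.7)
The plan is to reduce both Cuntz relations to pointwise identities on $\mathbb{T}$ involving the $N$ preimages of $z$ under $w\mapsto w^N$, and recognize these identities as exactly the defining conditions for $F\in\mathcal{OF}_N$. The key preliminary computation is an explicit formula for the adjoint $S_j^{*}$ on $L^2(\mathbb{T})$, analogous to formula (\ref{eq:2.3}) in the $N=2$ case.

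First I would derive $S_j^{*}$. Given $\varphi,\psi\in L^2(\mathbb{T})$, I would expand
\[
\langle S_j\varphi,\psi\rangle=\int_{\mathbb{T}}\overline{f_j(z)\,\varphi(z^N)}\,\psi(z)\,d\mu(z),
\]
and apply the fact that $w\mapsto w^N$ is an $N$-to-$1$ covering of $\mathbb{T}$ that pushes Haar measure forward to Haar measure (after dividing by $N$). Rewriting the integral in the variable $w=z^N$ and summing over the $N$ preimages yields
\[
(S_j^{*}\psi)(w)=\frac{1}{N}\sum_{z:\,z^N=w}\overline{f_j(z)}\,\psi(z),
\]
which generalizes (\ref{eq:2.3}).

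Next I would verify the two Cuntz relations by direct calculation. For the orthogonality relation (\ref{eq:3.11}), applying the formulas gives
\[
(S_j^{*}S_k\varphi)(w)=\varphi(w)\cdot\frac{1}{N}\sum_{z^N=w}\overline{f_j(z)}\,f_k(z),
\]
because $\varphi(z^N)=\varphi(w)$ pulls out. Hence $S_j^{*}S_k=\delta_{j,k}I$ holds if and only if for a.e.\ $w\in\mathbb{T}$,
\[
\frac{1}{N}\sum_{z^N=w}\overline{f_j(z)}\,f_k(z)=\delta_{j,k},
\]
which is exactly the orthonormality of the columns of the modulation matrix $M(w):=\frac{1}{\sqrt{N}}\bigl(f_j(z)\bigr)_{j,\,z^N=w}$. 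For the completeness relation (\ref{eq:3.12}), a similar computation gives
\[
\Bigl(\sum_j S_jS_j^{*}\psi\Bigr)(z)=\frac{1}{N}\sum_{w:\,w^N=z^N}\Bigl(\sum_j f_j(z)\overline{f_j(w)}\Bigr)\psi(w),
\]
and this equals $\psi(z)$ for every $\psi$ precisely when $\frac{1}{N}\sum_j f_j(z)\overline{f_j(w)}=\delta_{z,w}$ for all pairs $z,w$ with $w^N=z^N$. This is the orthonormality of the rows of the same matrix $M(\cdot)$.

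The two pointwise identities together assert that $M(w)$ is a unitary $N\times N$ matrix a.e., which by definition means $F=(f_j)\in\mathcal{OF}_N$. The main obstacle is bookkeeping: handling the $N$-to-$1$ change of variables carefully (so the factors of $1/N$ land correctly), and noting that since $M(w)$ is a square matrix, unitarity of its columns and unitarity of its rows are each equivalent to $M(w)$ being unitary — so in principle each Cuntz relation already implies the other pointwise, but spelling out both directions makes the equivalence with $\mathcal{OF}_N$ transparent.
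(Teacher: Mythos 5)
The paper states Lemma \ref{L:3.10} without proof, citing \cite{JoSo10}, so there is no in-paper argument to compare against; your direct verification --- computing $S_{j}^{*}$ via the $N$-to-$1$ change of variables $z\mapsto z^{N}$ (which correctly generalizes (\ref{eq:2.3})) and reducing the two Cuntz relations to the a.e.\ unitarity of the $N\times N$ matrix $\frac{1}{\sqrt{N}}\bigl(f_{j}(z)\bigr)_{j,\,z^{N}=w}$ --- is correct and is exactly the ``direct verification'' the paper alludes to in Subsection \ref{sec:3.1}. The only cosmetic point is that your labels ``rows'' and ``columns'' are transposed relative to your own indexing of $M(w)$; this is immaterial since, as you observe, for a square matrix either orthonormality condition is equivalent to unitarity.
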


We say that the isometries $\{ S_{j} \}_{j \in \mathbb{Z}_{N}}$ define a
representation of the Cuntz-algebra $\mathcal{O}_{N}$,
$(S_{j}) \in Rep(\mathcal{O}_{N}, L^{2}(\mathbb{T}))$.

\begin{lemma}
\label{L:4.1}
\cite{JoSo10} Let $N\in\bz_{+}$ be fixed, $N>1$, and let $A=(A_{j,k})$ be an $N \times N$
matrix-function with $A_{j,k} \in L^{2}(\bt)$.  Then the following two
conditions are equivalent:
\begin{enumerate} [(i)]
\item For $F=(f_{j})\in \mathcal{F}_{2}(N)$, we have $F(z)=A(z^{N})b(z)$.
\item $A_{i,j}=S_{j}^{*}f_{i}$ where the operators $S_{i}$ are from the
  Cuntz-relations (\ref{eq:3.11}, \ref{eq:3.12}).
\end{enumerate}
\end{lemma}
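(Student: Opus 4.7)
The plan is a direct two-way verification driven entirely by the Cuntz relations from Lemma \ref{L:3.10}. The structure is symmetric: the orthogonality relation $S_j^{*}S_k=\delta_{j,k}I$ will handle (i) $\Rightarrow$ (ii), and the resolution of identity $\sum_{j}S_{j}S_{j}^{*}=I$ will handle (ii) $\Rightarrow$ (i). The key observation powering both directions is that, under the standard representation (\ref{E:3.10}), applying $S_k$ to a scalar function $\varphi \in L^{2}(\mathbb{T})$ produces exactly a function of the form $b_k(z)\varphi(z^N)$, so that the matrix-vector product $A(z^N)b(z)$ is a column-wise sum of Cuntz images.

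For the forward direction, I would first unpack (i) componentwise: $F(z)=A(z^{N})b(z)$ means $f_{i}(z)=\sum_{k} A_{i,k}(z^{N})\,b_{k}(z)$ for each row index $i$. By (\ref{E:3.10}) each summand equals $(S_{k}A_{i,k})(z)$, so $f_{i}=\sum_{k} S_{k}(A_{i,k})$ as elements of $L^{2}(\mathbb{T})$. Applying $S_{j}^{*}$ to both sides and invoking $S_{j}^{*}S_{k}=\delta_{j,k}I$ collapses the sum to a single term, yielding $S_{j}^{*}f_{i}=A_{i,j}$, which is precisely (ii).

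For the reverse direction, I would begin from (ii) and use the Cuntz identity $\sum_{j}S_{j}S_{j}^{*}=I$ to write
\[
  f_{i}=\Bigl(\sum_{j}S_{j}S_{j}^{*}\Bigr)f_{i}=\sum_{j}S_{j}(S_{j}^{*}f_{i})=\sum_{j}S_{j}(A_{i,j}).
\]
Reading off the explicit action of $S_{j}$ from (\ref{E:3.10}) converts the right side into $\sum_{j}A_{i,j}(z^{N})\,b_{j}(z)$, which is exactly the $i$-th entry of the matrix-vector product $A(z^{N})b(z)$. Assembling the $N$ rows gives $F(z)=A(z^{N})b(z)$, i.e., (i).

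The main obstacle is notational rather than conceptual: one has to match the ``standard representation'' generators in (\ref{E:3.10}) with the generating vector $b(z)$ used to present $F$, so that the literal matrix-product in (i) coincides with the Cuntz expansion $\sum_{j} S_{j}(A_{i,j})$. Once that identification is made, each implication is a one-line invocation of a Cuntz relation. A minor technical point, easily dispatched, is the interchange of $S_{j}^{*}$ with the finite sum over $k$, which is legitimate since each $S_{j}^{*}$ is a bounded operator on $L^{2}(\mathbb{T})$.
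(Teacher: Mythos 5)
Your proof is correct and follows essentially the same route as the paper's: both directions unpack the matrix--vector product as $f_i=\sum_j S_j(A_{i,j})$, then use $S_j^{*}S_k=\delta_{j,k}I$ for (i)$\Rightarrow$(ii) and $\sum_j S_jS_j^{*}=I$ for (ii)$\Rightarrow$(i). No gaps; your added remark about interchanging $S_j^{*}$ with the finite sum is a harmless extra precaution.
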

\begin{proof}
(i) $\Rightarrow$ (ii). Writing out the matrix-operation in (i), we get
\begin{equation}
\label{E:4.2}
  f_{i}(z)=\sum_{j}A_{i,j}(z^{N})z^{j}=\sum_{j}(S_{j}A_{i,j})(z).
\end{equation}
Using $S_{j}^{*}S_{k}=\delta_{j,k}I$, we get $A_{i,j}=S_{j}^{*}f_{i}$ which
is (ii).

Conversely, assuming (ii) and using $\sum_{j}S_{i}S_{j}^{*}=I$, we get
$\sum_{j}S_{j}A_{i,j}=f_{i}$ which is equivalent to (i) by the computation in
(\ref{E:4.2}) above.
\end{proof}

\begin{theorem}
\label{T:4.2.5}
(Sweldens \cite{SwRo91}, \cite{JoSo10})
Let $A \in SL_{2}\text{(pol)}$, then there are $l, p \in \bz_{+}$,
$K \in \bc \setminus \{0\}$ and polynomial functions $U_{1}, \ldots, U_{p}$,
$L_{1}, \ldots, L_{p}$ such that
\begin{equation}
\label{E:4.2.12}
  A(z)=z^{l}
  \begin{pmatrix}
    K & 0 \\
    0 & K^{-1}
  \end{pmatrix}
  \begin{pmatrix}
    1 & U_{1}(z) \\
    0 & 1
  \end{pmatrix}
  \begin{pmatrix}
    1 & 0 \\
    L_{1}(z) & 1
  \end{pmatrix}
  \cdots
  \begin{pmatrix}
    1 & U_{p}(z) \\
    0 & 1
  \end{pmatrix}
  \begin{pmatrix}
    1 & 0 \\
    L_{p}(z) & 1
  \end{pmatrix}.
\end{equation}
\end{theorem}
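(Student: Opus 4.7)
The plan is to apply the lifting step from Section 3 (see (\ref{eq:3.17})) iteratively, but to choose the scalar functions $L_i$ and $U_i$ at each stage by \emph{Euclidean polynomial division} in the Laurent polynomial ring $\mathbb{C}[z,z^{-1}]$, rather than by the $L^{2}$--optimality formula (\ref{eq:3.10}). The prefactor $z^{l}$ in (\ref{E:4.2.12}) exists precisely to absorb the unit ambiguity of this ring (whose units are the monomials $cz^{n}$), so we may regard the entries of $\mathcal{A}$ as elements of $\mathbb{C}[z,z^{-1}]$ from the outset.

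For $\mathcal{A} = \begin{pmatrix} A & B \\ C & D \end{pmatrix} \in SL_{2}(\mathrm{pol})$, the identity $AD-BC = 1$ forces $A$ and $C$ to be coprime up to units of $\mathbb{C}[z,z^{-1}]$ by Bezout. Define the span $\delta(f)$ of a nonzero Laurent polynomial to be the difference between its highest and lowest nonzero exponent. If $\delta(A) \le \delta(C)$, perform a division $C = L A + R$ with $\delta(R) < \delta(A)$ and multiply $\mathcal{A}$ on the left by $\begin{pmatrix} 1 & 0 \\ -L & 1 \end{pmatrix}$; if $\delta(A) > \delta(C)$, do the symmetric step with $\begin{pmatrix} 1 & -U \\ 0 & 1 \end{pmatrix}$. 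In either case $\max(\delta(A), \delta(C))$ strictly decreases, so after finitely many iterations the first column reduces to a pure monomial $\binom{c z^{m}}{0}$. At this point the determinant condition forces the resulting matrix to be upper triangular with diagonal entries $cz^{m}, c^{-1}z^{-m}$; one further upper-lift clears the remaining off-diagonal polynomial. Extracting $z^{m}$ and the diagonal $\mathrm{diag}(K, K^{-1})$ with $K=c$ and $l=m$ puts the residue in the prescribed form.

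To match the strictly alternating $U$-then-$L$ pattern in (\ref{E:4.2.12}) I would pad missing steps with identity factors (setting $U_{i}=0$ or $L_{i}=0$) and commute the central diagonal $\mathrm{diag}(K, K^{-1})$ past any factors that end up on its wrong side; this is legitimate because conjugation by $\mathrm{diag}(K, K^{-1})$ sends $\begin{pmatrix} 1 & U \\ 0 & 1 \end{pmatrix}$ to $\begin{pmatrix} 1 & K^{2} U \\ 0 & 1 \end{pmatrix}$ and $\begin{pmatrix} 1 & 0 \\ L & 1 \end{pmatrix}$ to $\begin{pmatrix} 1 & 0 \\ K^{-2} L & 1 \end{pmatrix}$, both of which remain polynomial. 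The main obstacle is the borderline case $\delta(A) = \delta(C)$: the quotient $L$ must be chosen to produce an actual element of $\mathbb{C}[z, z^{-1}]$ rather than a rational function, which requires dividing by the leading monomial coefficient and verifying that the span genuinely strictly drops. This is exactly where the polynomial hypothesis is essential and where the approach diverges from the $L^{\infty}(\mathbb{T})$ setting of Section 3, in which no discrete valuation is available and finite termination fails.
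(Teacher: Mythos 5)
Your argument is sound in substance and arrives at the factorization by a genuinely different organization of the Euclidean step than the paper's. The paper does not prove Theorem \ref{T:4.2.5} where it is stated (it is cited to Sweldens and to \cite{JoSo10}); its own development is the algorithm of the $2\times 2$ polynomial subsection and Remarks \ref{R:5.1.1}--\ref{R:5.1.2}: all entries are first normalized to ordinary polynomials, and at each stage a \emph{single} quotient $u=quot(\gamma,\alpha)$ is required to reduce both entries of a row simultaneously, $\alpha=h_{0}+u\gamma$ and $\beta=h_{1}+u\delta$ as in (\ref{E:5.1.7}), with the polyphase bookkeeping carried by the Cuntz isometries $S_{0},S_{1}$ and termination read off from the degree chain (\ref{E:5.1.12}); the consistency of one $u$ for both columns rests on $\det A\equiv 1$ and Remark \ref{R:4.2.6}. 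You instead run Euclid on the first column alone in $\mathbb{C}[z,z^{-1}]$, with the span as Euclidean function, and let the determinant dictate the second column only at the very end; this sidesteps the (not entirely obvious) claim that the same quotient works for both columns, makes the roles of the unit $cz^{m}$, of $K$, and of the prefactor $z^{l}$ transparent, and dispenses with the Cuntz-algebra formalism, which in the paper is organizational rather than logically necessary. Two points to tighten. First, when $\delta(A)=\delta(C)$ the quantity $\max(\delta(A),\delta(C))$ does not strictly drop in a single division step; it is $\min(\delta(A),\delta(C))$ (equivalently, the ordered pair of spans in lexicographic order) that strictly decreases, which still yields termination. Second, your terminal diagonal is $\mathrm{diag}(cz^{m},\,c^{-1}z^{-m})$, which is \emph{not} of the form $z^{l}\begin{pmatrix} K & 0 \\ 0 & K^{-1}\end{pmatrix}$ unless $m=0$ (and indeed $\det A\equiv 1$ forces $l=0$ in (\ref{E:4.2.12}), since the scalar $z^{l}$ contributes $z^{2l}$ to the determinant); you must either observe that your initial unit normalization already guarantees $m=0$, or further factor $\mathrm{diag}(z^{m},z^{-m})$ into unipotent triangular factors by the standard $SL_{2}$ identity. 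Neither point undermines the approach, and with those repairs your proof is complete and arguably cleaner than the paper's sketch.
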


The filter algorithm corresponding to the matrix-factorization in 
(\ref{E:4.2.12}) is as follows:
And in steps:

\begin{remark}
\label{R:4.2.6}
\cite{JoSo10} Note that if
\[
  \begin{pmatrix}
    \alpha & \beta \\
    \gamma & \delta
  \end{pmatrix}
  \in SL_{2}\text{(pol)},
\]
then one of the two functions $\alpha(z)$ or $\delta(z)$ must be a monomial.
\end{remark}

\subsection{The $2 \times 2$ case: Polynomials}
\label{sec:5.1}
\cite{JoSo10} To highlight the general ideas, we begin with some details worked out in the
$2 \times 2$ case; see equation (\ref{eq:3.16-1}).

To get finite algorithms, we should assume in the present subsection that the
matrix-entries are polynomials.

First note that from the setting in Theorem \ref{T:4.2.5}, we may assume
that matrix entries have the form $f_{H}(z)$ as in section \ref{sec:2} but
with $H \subset \{0,1,2, \cdots\}$, i.e., $f_{H}(z)=a_{0}+a_{1}z+ \cdots$.
This facilitates our use of the Euclidean algorithm.

Specifically, if $f$ and $g$ are polynomials (i.e.,
$H \subset \{0,1,2, \cdots\})$ and if deg$(g)\leq$ deg$(f)$, the Euclidean
algorithm yields
\begin{equation}
\label{E:5.1.1}
  f(z)=g(z)q(z)+r(z)
\end{equation}
with deg$(r) < $ deg$(g)$.  We shall write
\begin{equation}
\label{E:5.1.2}
  q=quot(g,f), \quad \text{and} \quad r=rem(g,f).
\end{equation}

Since
\begin{equation}
\label{E:5.1.2}
  \begin{pmatrix}
    K & 0 \\
    0 & K^{-1}
  \end{pmatrix}
  \begin{pmatrix}
    1 & U \\
    0 & 1
  \end{pmatrix}
  =
  \begin{pmatrix}
    1 & K^{2}U \\
    0 & 1
  \end{pmatrix}
  \begin{pmatrix}
    K & 0 \\
    0 & K^{-1}
  \end{pmatrix},
\end{equation}
we may assume that the factor
\[
  \begin{pmatrix}
    K & 0 \\
    0 & K^{-1}
  \end{pmatrix}
\]
from the equation (\ref{E:5.1.2}) factorization occurs on the rightmost place.

\begin{equation}
\label{E:4.2.9}
  F=U_{N}[b],
\end{equation}
where $U$ is a unitary matrix-function, where
\[
  b=
  \begin{pmatrix}
    1 \\
    z \\
    z^{2} \\
    \vdots \\
    z^{N-1}
  \end{pmatrix}
\]
and where $U_{N}[b](z)=U(z^{N})b(z)$.

Let $U$ represent scalar valued matrix entry in a matrix function.
We now proceed to determine the polynomials $U_{1}(z), L_{1}(z), \cdots$,
etc. inductively starting with
\[
  A=
  \begin{pmatrix}
    1 & U \\
    0 & 1
  \end{pmatrix}
  B,
\]
where $U$ and $B$ are to be determined.  Introducing \ref{E:4.2.9}), this
reads
\begin{equation}
\label{E:5.1.3}
  A(z^{2})
  \begin{pmatrix}
    1 \\
    z
  \end{pmatrix}
  =
  \begin{pmatrix}
    1 & U(z^{2}) \\
    0 & 1
  \end{pmatrix}
  B(z^{2})
  \begin{pmatrix}
    1 \\
    z
  \end{pmatrix}
  =
  \begin{pmatrix}
    1 & U(z^{2}) \\
    0 & 1
  \end{pmatrix}
  \begin{pmatrix}
    h(z) \\
    k(z)
  \end{pmatrix}.
\end{equation}
But the matrix function
\[
  A=
  \begin{pmatrix}
    \alpha & \beta \\
    \gamma & \delta
  \end{pmatrix}
\]
is given and fixed see Remark \ref{R:4.2.6}.  Hence
\begin{equation}
\label{E:5.1.4}
  \gamma(z^{2})+\delta(z^{2})z=k(z)
\end{equation}
is also fixed.  The two polynomials to be determined are $u$ and $h$ in
(\ref{E:5.1.3}).  Carrying out the matrix product in (\ref{E:5.1.3}) yields:
\[
  \alpha(z^{2})+\beta(z^{2})z=h(z)+u(z^{2})k(z)
  = h_{0}(z)+h_{1}(z^{2})z+u(z^{2})\{\gamma(z^{2})+\delta(z^{2})z\}
\]
where we used the orthogonal splitting
\begin{equation}
\label{E:5.1.5}
  L^{2}(\mathbb{T})=S_{0}S_{0}^{*}L^{2}(\mathbb{T})\oplus
  S_{1}S_{1}^{*}L^{2}(\mathbb{T})
\end{equation}
from Lemma \ref{L:3.10}.  Similarly, from (\ref{E:5.1.4}), we get
\[
  \gamma(z^{2})+\delta(z^{2})z=k_{0}(z^{2})+k_{1}(z^{2})z;
\]
and therefore $\gamma = k_{0}$ and $\delta=k_{1}$, by Lemma \ref{L:4.1}.

Collecting terms and using the orthogonal splitting (\ref{E:5.1.5}) we arrive
at the following system of polynomial equations:
\begin{equation}
\label{E:5.1.6}
\begin{cases}
  \alpha = h_{0} + u\gamma \\
  \beta = h_{1} + u\delta ;
\end{cases}
\end{equation}
or more precisely,
\[
\begin{cases}
  \alpha(z) = h_{0}(z) + u(z)\gamma(z) \\
  \beta(z) = h_{1}(z) + u(z)\delta(z).
\end{cases}
\]
It follows that the two functions $u$ and $h$ may be determined from the
Euclidean algorithm.  With (\ref{E:5.1.2}), we get
\begin{equation}
\label{E:5.1.7}
\begin{cases}
  u=quot(\gamma, \alpha) \\
  h_{0}=rem(\gamma, \alpha) \\
  h_{1}=rem(\delta, \beta).
\end{cases}
\end{equation}

\begin{remark}
\label{R:5.1.1}
\cite{JoSo10}
The relevance of the determinant condition we have from Theorem \ref{T:4.2.5}
is as follows:
\[
  detA=\alpha\delta-\beta\gamma \equiv 1.
\]
Substitution of (\ref{E:5.1.6}) into this yields:
\[
  h_{0}\delta - h_{1}\gamma \equiv 1.
\]

Solutions to (\ref{E:5.1.6}) are possible because the two polynomials
$\delta(z)$ and $\gamma(z)$ are mutually prime.  The derived matrix
\[
  \begin{pmatrix}
    h_{0} & h_{1} \\
    \gamma & \delta
  \end{pmatrix}
\]
is obtained from $A$ via a row-operation in the ring of polynomials.

For the inductive step, it is important to note:
\begin{equation}
\label{E:5.1.8}
  deg(h_{0}) < deg(\gamma), \quad \text{and} \quad
  deg(h_{1}) < deg(\delta).
\end{equation}
The next step, continuing from (\ref{E:5.1.3}) is the determination of a
matrix-function $C$ and three polynomials $p, q,$ and $L$ such that
\begin{equation}
\label{E:5.1.9}
  \begin{pmatrix}
    1 & -U \\
    0 & 1
  \end{pmatrix}
  A=
  \begin{pmatrix}
    1 & 0 \\
    L & 1
  \end{pmatrix}
  C
\end{equation}
and
\begin{equation}
\label{E:5.1.10}
  \begin{pmatrix}
    1 & -U(z^{2}) \\
    0 & 1
  \end{pmatrix}
  A(z^{2})
  \begin{pmatrix}
    1 \\
    z
  \end{pmatrix}
  =
  \begin{pmatrix}
    1 & 0 \\
    L(z^{2}) & 1
  \end{pmatrix}
  \begin{pmatrix}
    p(z) \\
    q(z)
  \end{pmatrix}.
\end{equation}
Here
\[
  \begin{pmatrix}
    p \\
    q
  \end{pmatrix}
  =C(z^{2})
  \begin{pmatrix}
    1 \\
    z
  \end{pmatrix}.
\]
The reader will notice that in this step, everything is as before with the
only difference that now
\[
  \begin{pmatrix}
    1 & 0 \\
    L & 1
  \end{pmatrix}
\]
is lower diagonal in contrast with
\[
  \begin{pmatrix}
    1 & U \\
    0 & 1
  \end{pmatrix}
\]
in the previous step.

This time, the determination of the polynomial $p$ in (\ref{E:5.1.10}) is
automatic.  With
\[
  p(z)=p_{0}(z^{2})+zp_{1}(z^{2})
\]
(see (\ref{E:5.1.5})) and we get the following system:
\[
  \begin{cases}
    p_{0}=\alpha - u\gamma = h_{0} \\
    p_{1}=\beta - u\delta = h_{1} ; \quad \text{and}
  \end{cases}
\]
\[
  \begin{cases}
    \gamma=L(\alpha-u\gamma)+q_{0}=Lh_{0}+q_{0} \\
    \delta=L(\beta-u\delta)+q_{1}=Lh_{1}+q_{1} \\
  \end{cases}.
\]
So the determination of $L(z)$ and $q(z)=q_{0}(z^{2})+zq_{1}(z^{2})$ may be
done with Euclid:
\begin{equation}
\label{E:5.1.11}
  \begin{cases}
    L= quot(\alpha-u\gamma, \gamma)=quot(h_{0}, \gamma) \\
    q_{0}= rem(\alpha-u\gamma, \gamma)=rem(h_{0}, \gamma) \\
    q_{1}= rem(\beta-u\delta, \delta)=rem(h_{1}, \delta).
  \end{cases}
\end{equation}

Combining the two steps, the comparison of degrees is as follows:
\begin{equation}
\label{E:5.1.12}
  \begin{cases}
    deg(q_{0}) < deg(h_{0}) < deg(\gamma) \\
    deg(q_{1}) < deg(h_{1}) < deg(\delta)
  \end{cases}.
\end{equation}
Two conclusions now follow:
\begin{enumerate} [(i)]
  \item the procedure may continure by recursion;
  \item the procedure must terminate.
\end{enumerate}
\end{remark}

\begin{remark}
\label{R:5.1.2}
In order to start the algorithm in (\ref{E:5.1.7}) with direct reference to
Euclid, we must have
\begin{equation}
\label{E:5.1.13}
  deg(\gamma) \leq deg(\alpha)
\end{equation}
where
\[
  A=
  \begin{pmatrix}
    \alpha & \beta \\
    \gamma & \delta
  \end{pmatrix}
\]
is the initial $2 \times 2$ matrix-function.

Now, suppose (\ref{E:5.1.13}), i.e., that
\[
  deg(\gamma) > deg(\alpha).
\]
Then determine a polynomial $L$ such that
\begin{equation}
\label{E:5.1.14}
  deg(\gamma-L\alpha) \leq deg(\alpha).
\end{equation}
We may then start the procedure (\ref{E:5.1.7}) on the matrix function
\[
  \begin{pmatrix}
    \alpha & \beta \\
    \gamma -L\alpha & \delta
  \end{pmatrix}
  =
  \begin{pmatrix}
    1 & 0 \\
    -L & 1
  \end{pmatrix}
  A.
\]
If a polynomial $U$ and a matrix function $B$ is then found for
\[
  \begin{pmatrix}
    \alpha & \beta \\
    \gamma -L\alpha & \delta
  \end{pmatrix}
\]
then the factorization
\[
  A=
  \begin{pmatrix}
    1 & 0 \\
    L & 1
  \end{pmatrix}
  \begin{pmatrix}
    1 & U \\
    0 & 1
  \end{pmatrix}
  B
\]
holds; and the recursion will then work as outlined.

In the following, starting with a matrix-function $A$, we will always assume
that the degrees of the polynomials $(A_{i,j})_{i,j\in \mathbb{Z}_{N}}$ have
been adjusted this way, so the direct Euclidean algorithm can be applied.
\end{remark}

\subsection{The $3 \times 3$ case}
\label{sec:5.2}
The thrust of this section is the assertion that Theorem \ref{T:4.2.5} holds
with small modifications in the $3 \times 3$ case.

\subsubsection{Comments:}
In the definition of $A \in SL_{3}(\text{pol})$, it is understood that $A(z)$
has $detA(z)\equiv 1$ and that the entries of the inverse matrix $A(z)^{-1}$
are again polynomials.

Note that if $L, M, U$ and $V$ are polynomials, then the four matrices
\begin{equation}
\label{E:5.2.1}
  \begin{pmatrix}
    1 & 0 & 0 \\
    L & 1 & 0 \\
    0 & M & 1
  \end{pmatrix},
  \begin{pmatrix}
    1 & 0 & 0 \\
    0 & 1 & 0 \\
    L & 0 & 1
  \end{pmatrix},
  \begin{pmatrix}
    1 & U & 0 \\
    0 & 1 & V \\
    0 & 0 & 1
  \end{pmatrix}
  \quad \text{and} \quad
  \begin{pmatrix}
    1 & 0 & U \\
    0 & 1 & 0 \\
    0 & 0 & 1
  \end{pmatrix}
\end{equation}
are in $SL_{3}(\text{pol})$ since

\begin{equation}
\label{E:5.2.2}
  \begin{pmatrix}
    1 & 0 & 0 \\
    L & 1 & 0 \\
    0 & M & 1
  \end{pmatrix}^{-1}
  =
  \begin{pmatrix}
    1 & 0 & 0 \\
    -L & 1 & 0 \\
    LM & -M & 1
  \end{pmatrix} \quad \text{and}
\end{equation}

\begin{equation}
\label{E:5.2.3}
  \begin{pmatrix}
    1 & U & 0 \\
    0 & 1 & V \\
    0 & 0 & 1
  \end{pmatrix}^{-1}
  =
  \begin{pmatrix}
    1 & -U & UV \\
    0 & 1 & -V \\
    0 & 0 & 1
  \end{pmatrix}.
\end{equation}

\begin{theorem}
\label{T:5.2.1}
\cite{JoSo10}
Let $A \in SL_{3}(\text{pol})$; then the conclusion in Theorem \ref{T:4.2.5}
carries over with the modification that the alternating upper and lower
triangular matrix-functions now have the form (\ref{E:5.2.1}) or
(\ref{E:5.2.2})-(\ref{E:5.2.3}) where the functions $L_{j}, M_{j}, U_{j}$ and
$V_{j}$, $j=1,2, \cdots$ are polynomials.
\end{theorem}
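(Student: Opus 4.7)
\medskip

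\noindent\textbf{Proof plan for Theorem \ref{T:5.2.1}.}

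The strategy is to imitate the $2\times 2$ proof in Section \ref{sec:5.1} by working in the polyphase decomposition adapted to $N=3$. The key change is that the canonical isometries become $S_jf(z)=z^{j}f(z^{3})$ for $j=0,1,2$, with $S_j^*S_k=\delta_{j,k}I$ and $\sum_{j=0}^{2}S_jS_j^*=I$ (Lemma \ref{L:3.10} with $N=3$); this gives the orthogonal splitting
\[
L^{2}(\mathbb{T})=\bigoplus_{j=0}^{2} S_jS_j^{*}L^{2}(\mathbb{T}),
\]
and the columnwise identity $A(z^3)(1,z,z^{2})^{T}=(f_0,f_1,f_2)^{T}$ gives $A_{i,j}=S_{j}^{*}f_{i}$ as in Lemma \ref{L:4.1}. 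Every step of the $2\times 2$ algorithm then has an analogue: to factor out an elementary lower (resp.\ upper) triangular matrix from (\ref{E:5.2.1}), we pass the action through $(1,z,z^{2})^{T}$, apply $S_{j}^{*}$ to the resulting system, and solve the linear relations over the polynomial ring by the Euclidean algorithm, exactly as in (\ref{E:5.1.6})--(\ref{E:5.1.7}).

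The factorization would proceed by alternating two kinds of reductions. First, using a matrix of the form with $L$ and $M$ on the subdiagonal, I would simultaneously reduce the degrees of the $(2,*)$ and $(3,*)$ entries against the $(1,*)$ and $(2,*)$ entries, respectively, via two Euclidean steps
\[
L=\operatorname{quot}(A_{2,\cdot},A_{1,\cdot}),\qquad M=\operatorname{quot}(A_{3,\cdot}-LA_{1,\cdot},A_{2,\cdot}-LA_{1,\cdot}),
\]
applied column by column in the polyphase variables; then, analogously, an upper triangular factor with parameters $U,V$ reduces the top two rows against the bottom two. The ``long'' generators in (\ref{E:5.2.1}) with the single entry at positions $(3,1)$ or $(1,3)$ are used as the $N=3$ analogue of Remark \ref{R:5.1.2}: they serve to pre-condition the matrix so that the degree inequality $\deg(A_{i+1,j})\le\deg(A_{i,j})$ needed to start Euclid actually holds for the relevant pair. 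As in Remark \ref{R:5.1.1}, the hypothesis $\det A\equiv 1$ forces the necessary coprimality so that all quotients/remainders appearing have the degree bounds that drive the induction.

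For termination I would use the monovariant $\Delta(A)=\sum_{i,j}\deg A_{i,j}$ (with the usual convention $\deg 0=-\infty$), as in (\ref{E:5.1.8}) and (\ref{E:5.1.12}). Each alternating pair of elementary factorizations replaces an entry by a strict Euclidean remainder, so $\Delta(A)$ strictly decreases, unless it has already been driven down to the minimal case — a constant unimodular diagonal matrix times the monomial matrix $\mathrm{diag}(1,z^{l_1},z^{l_2})$ factor (the $3\times 3$ analogue of the leading $z^{l}\mathrm{diag}(K,K^{-1})$ in (\ref{E:4.2.12})). Together with the intertwining relation analogous to (\ref{E:5.1.2}), these constants can then be pushed to the right end of the product.

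The main obstacle, which distinguishes the $3\times 3$ case from $2\times 2$, is that the generating set (\ref{E:5.2.1}) is constrained: the ``short'' triangular generators couple only the adjacent row pairs $(1,2)$ and $(2,3)$ (resp.\ the column pairs), and the coupling $(1,3)$ is available only through a \emph{single} off-diagonal slot. One must therefore show that any transvection can be realized by a bounded alternating product of these generators without inflating $\Delta(A)$; this amounts to verifying that the three Euclidean reductions (on the pairs $(1,2),(2,3),(1,3)$) can be sequenced so that no reduction undoes the gains of a previous one. The determinantal identity, specialized in the $3\times 3$ case, is what makes this scheduling possible, just as $h_{0}\delta-h_{1}\gamma\equiv 1$ did in Remark \ref{R:5.1.1}.
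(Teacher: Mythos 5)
Your proposal follows essentially the same route as the paper: the paper gives no separate proof of Theorem \ref{T:5.2.1}, but its proof of the general $N\times N$ case (Theorem \ref{T:5.3.1}) uses exactly your scheme --- the Cuntz isometries $S_jf(z)=z^jf(z^N)$, the polyphase identity $A_{i,j}=S_j^{*}f_i$, columnwise Euclidean division to extract alternating sub/super-diagonal triangular factors, and termination by strict degree decrease. The consistency issue you flag (that the same quotient must serve every column, and that the constrained generators must not undo earlier degree gains) is genuine and is passed over in the paper, which simply asserts the claim ``with the same polynomial $L_{1}$ for $j=0,1,\cdots,N-1$'' on the strength of the determinant condition, as in Remark \ref{R:5.1.1}.
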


\subsection{The $N \times N$ case}
\label{sec:5.3}

Below we outline the modifications to our algorithms from the 
$2 \times 2$ case needed in order to deal with filters with $N (> 2)$ bands, 
hence factorization of $N \times N$ matrix functions. The main difference when 
the number of frequency bands $N$ is more than $2$ is that in our 
factorizations, both the lower and the upper triangular factors, must take 
into account operations which cross between any pair of the total system of 
$N$ frequency bands.

\begin{theorem}
\label{T:5.3.1}
\cite{JoSo10}
Let $N \in \mathbb{Z}_{+}$, $N>1$, be given and fixed.  Let
$A \in SL_{N}(\text{pol})$; then the conclusions in Theorem \ref{T:4.2.5} carry
over with the modification that the alternative factors in the product are
upper and lower triangular matrix-functions in $SL_{N}(\text{pol})$.  We may
take the lower triangular matrix-factors
$\mathcal{L}=(L_{i,j})_{i,j \in \mathbb{Z}_{N}}$ of the form
\[
  \begin{pmatrix}
  1 & 0 & 0 & 0 & 0 & 0 & 0 & 0  \\
  0 & 1 & 0 & 0 & 0 & 0 & 0 & 0  \\
  L_{p} & 0 & 1 & 0 & 0 & 0 & 0 & 0 \\
  0     & L_{p+1} & 0  & 1 & 0 & 0 & 0 & 0 \\
  0 & 0 & . & 0  &1 & 0 & 0 & 0 \\
  0 & 0 & 0 & . & 0 & 1 & 0 & 0 \\
  0 & 0 & 0 & 0 & . & 0 & 1 & 0  \\
  0 & 0 & 0 & 0 & 0 & L_{N-1} & 0 & 1
  \end{pmatrix}
\]
polynomial entries
\begin{equation}
\label{E:5.3.1}
  \begin{cases}
  L_{i,i} \equiv 1, \\
  L_{i,j}(z)=\delta_{i-j, p}L_{i}(z);
  \end{cases}
\end{equation}
and the upper triangular factors of the form
$\mathcal{U}=(U_{i,j})_{i,j\in \mathbb{Z}_{N}}$ with
\begin{equation}
\label{E:5.3.2}
  \begin{cases}
  U_{i,i} \equiv 1, \\
  L_{i,j}(z)=\delta_{i-j, p}U_{i}(z).
  \end{cases}
\end{equation}
\end{theorem}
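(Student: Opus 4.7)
The plan is to extend the Euclidean-algorithm reduction of the $2 \times 2$ case (Subsection \ref{sec:5.1}) and its $3 \times 3$ refinement (Theorem \ref{T:5.2.1}) to an arbitrary number of bands by applying the same type of step to pairs of rows separated by every offset $p \in \{1, 2, \ldots, N-1\}$. The indispensable structural ingredient is Lemma \ref{L:4.1}: the matrix entries of $A$ appear as $A_{i,j} = S_j^{*} f_i$, where $(f_i)$ is the vector function $F(z) = A(z^N) b(z)$ and $\{S_j\}_{j \in \mathbb{Z}_N}$ is the canonical Cuntz representation from (\ref{E:3.10}). The Cuntz relations give an orthogonal band-decomposition $L^2(\mathbb{T}) = \bigoplus_{j \in \mathbb{Z}_N} S_j S_j^{*} L^2(\mathbb{T})$, the $N$-band analogue of (\ref{E:5.1.5}), which permits one to split every row of $A$ into its $N$ phase components modulo $N$ and carry out Euclidean division component by component.

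First I would fix an offset $p \in \{1, 2, \ldots, N-1\}$ and construct an elementary lower-banded factor of the form (\ref{E:5.3.1}). Left-multiplication by such a factor implements simultaneously, for each admissible pair $(i, i-p)$, the row operation $\text{row}_i \leftarrow \text{row}_i - L_i(z)\, \text{row}_{i-p}$ in the polynomial ring. Choosing $L_i$ via the Euclidean algorithm applied to the leading entries of those two rows, exactly as in (\ref{E:5.1.7}), yields the degree drop (\ref{E:5.1.8}). Composing next with an upper-banded factor of the form (\ref{E:5.3.2}), chosen the same way, gives a \emph{double step} that moreover achieves the further drop (\ref{E:5.1.12}). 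A preliminary adjustment in the spirit of Remark \ref{R:5.1.2} (itself implemented by one more banded factor) ensures that the initial comparison of degrees is in the order required by Euclid.

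The induction is organized by a degree monovariant such as $\Delta(A) = \sum_{i \neq j} \deg(A_{i,j})$, with the convention $\deg(0) = -\infty$. Cycling the offset $p$ through $\{1, \ldots, N-1\}$ and, within each offset, through the admissible rows, each double step strictly decreases $\Delta$; since $\Delta$ is nonnegative-integer-valued, the procedure terminates in finitely many steps. At termination $A$ is reduced to a constant matrix in $SL_N(\mathbb{C})$, which I would then factor by the classical Bruhat/LU decomposition (whose elementary factors already have the required banded shape) and absorb the resulting diagonal scalar part across the upper factors, using the commutation identity exemplified in the $2 \times 2$ case, to deliver the asserted factorization in the form (\ref{E:5.3.1})--(\ref{E:5.3.2}).

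The main obstacle will be controlling cross-interference between bands: a reduction targeted at offset $p$ may, via the row operations, raise the degrees of entries on bands at other offsets $p'$, potentially jeopardizing termination. I expect to resolve this in two ways. First, by always attacking the pair of rows carrying the current maximum off-diagonal degree; second, by verifying in the manner of Theorem \ref{T:5.2.1} that the induced changes on neighboring bands are bounded strictly in terms of the entry being reduced, so that $\Delta$ genuinely decreases on every double step. Once this degree bookkeeping is in place, termination and the final packaging into the banded shapes (\ref{E:5.3.1})--(\ref{E:5.3.2}) reduce to routine iteration of the two-band algorithm across all $\binom{N}{2}$ pairs of frequency bands.
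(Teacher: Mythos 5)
Your overall strategy --- the Cuntz isometries of Lemma \ref{L:4.1}, the orthogonal splitting of $L^{2}(\mathbb{T})$ into $N$ phase components, and Euclidean division applied to pairs of rows through alternating lower and upper banded factors --- is the same engine the paper runs. But there is a genuine gap at exactly the point you flag yourself: termination. Your monovariant $\Delta(A)=\sum_{i\neq j}\deg(A_{i,j})$ is not shown to decrease, and in general a naive column-by-column choice will not make it decrease: the row operation $\mathrm{row}_i \leftarrow \mathrm{row}_i - L_i\,\mathrm{row}_{i-p}$, with $L_i$ chosen by Euclid against the entries of one pivot column, replaces $A_{i,j}$ by $A_{i,j}-L_iA_{i-p,j}$ in \emph{every} column $j$, and in a non-pivot column this can have degree as large as $\deg(L_i)+\deg(A_{i-p,j})$, which may exceed $\deg(A_{i,j})$. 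Announcing that you ``expect to resolve this'' by attacking the maximal-degree pair and ``verifying'' boundedness is a restatement of the difficulty, not an argument; the entire content of the $N\times N$ theorem beyond the $2\times 2$ case lives there.

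The paper's mechanism for closing this is specific and you should adopt it. Instead of choosing $L_i$ per column, the paper works with the vector identity (\ref{E:5.3.6}), $A(z^{N})b(z)=\mathcal{L}_N(L)(z^{N})F(z)$, applies the $S_j^{*}$ to obtain (\ref{E:5.3.7}), $A_{i,j}=L_iA_{i-1,j}+S_j^{*}f_i$, and observes in (\ref{E:5.3.8}) that the \emph{same} quotient polynomial $L_i$ serves all columns $j=0,\dots,N-1$ simultaneously; this is the $N$-band analogue of the determinant argument of Remark \ref{R:5.1.1} and of the degree inequalities (\ref{E:5.1.8}) and (\ref{E:5.1.12}), and it is what forces the degree drop across the whole row at once, so that no cross-interference between bands can occur. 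Note also two organizational differences: the paper's proof uses only the offset-one bidiagonal factors (\ref{E:5.3.3})--(\ref{E:5.3.4}) in a single alternating sweep, rather than cycling the offset $p$ through all of $\{1,\dots,N-1\}$; and it terminates directly in the normal form of Theorem \ref{T:4.2.5} (a monomial factor times the diagonal with $K$, $K^{-1}$), with no separate Bruhat/LU step for a constant remainder. Your final packaging is harmless, but without the uniform-quotient argument the induction underneath it does not close.
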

\begin{proof}
\textbf{Notation.}  Let $U_{1}, \cdots, U_{N}$, $L_{1}, \cdots, L_{N}$ be
polynomials and set
\begin{equation}
\label{E:5.3.3}
  \mathcal{U}_{N}(U)=
  \begin{pmatrix}
  1 & U_{1} & 0 & 0 & 0 & 0 & 0  \\
  0 & 1 & U_{2} & 0 & 0 & 0 & 0  \\
  0 & 0 & 1 & . & 0 & 0 & 0  \\
  0 & 0 & 0 & 1 & . & 0 & 0  \\
  0 & 0 & 0 & 0 & 1 & . & 0  \\
  0 & 0 & 0 & 0 & 0 & 1 & U_{N-1}  \\
  0 & 0 & 0 & 0 & 0 & 0 & 1
  \end{pmatrix}
\end{equation}

\begin{equation}
\label{E:5.3.4}
  \mathcal{L}_{N}(L)=
  \begin{pmatrix}
  1 & 0 & 0 & 0 & 0 & 0 & 0  \\
  L_{1} & 1 & 0 & 0 & 0 & 0 & 0  \\
  0 & L_{2} & 1 & 0 & 0 & 0 & 0  \\
  0 & 0 & . & 1 & 0 & 0 & 0  \\
  0 & 0 & 0 & . & 1 & 0 & 0  \\
  0 & 0 & 0 & 0 & . & 1 & 0  \\
  0 & 0 & 0 & 0 & 0 & L_{N-1} & 1
  \end{pmatrix}
\end{equation}

Note that both are in $SL_{N}(\text{pol})$; and we have
\[
  \mathcal{U}_{N}(U)^{-1}=\mathcal{U}_{N}(-U) \quad \text{and}
\]
\[
  \mathcal{L}_{N}(L)^{-1}=\mathcal{L}_{N}(-L).
\]

\textbf{Step 1:} Starting with $A=(A_{i,j})\in SL_{N}(\text{pol})$.  Then
left-multiply with a suitably chosen $\mathcal{U}_{N}(-U)$ such that the
degrees in the first column of $\mathcal{U}_{N}(-U)A$ decrease, i.e.,
\begin{equation}
\label{E:5.3.5}
  deg(A_{0,0}) \leq deg(A_{1,0}-u_{2}A_{1,0}) \leq \cdots deg(A_{N-1,0}).
\end{equation}
In the following, we shall use the same letter $A$ for the modified
matrix-function.

\textbf{Step 2:} Determine a system of polynomials $L_{1}, \cdots, L_{N-1}$
and a polynomial vector-function
\[
  \begin{bmatrix}
    f_{0} \\
    f_{1} \\
    \ldots \\
    f_{N-1}
  \end{bmatrix}
\]
such that
\begin{equation}
\label{E:5.3.6}
  A_{N}
  \begin{bmatrix}
    1 \\
    z \\
    z^{2} \\
    \ldots \\
    z^{N-1}
  \end{bmatrix}
  = \mathcal{L}_{N}(L)_{N}
  \begin{bmatrix}
    f_{0} \\
    f_{1} \\
    \ldots \\
    f_{N-1}
  \end{bmatrix},
\end{equation}
or equivalently
\[
  \sum_{j=0}^{N-1}A_{i,j}(z^{N})z^{j}=
  \begin{cases}
    f_{0}(z) &\text{ if $i=0$} \\
    L_{i}(z^{N})f_{i-1}(z)+f_{i}(z) &\text{ if $i>0$}
  \end{cases}.
\]

\textbf{Step 3:} Apply the operators $S_{j}$ and $S_{j}^{*}$ from section
\ref{sec:2} to both sides in (\ref{E:5.3.6}).  First (\ref{E:5.3.6}) takes
the form:
\[
  \sum_{j=0}^{N-1}S_{j}A_{i,j}=
  \begin{cases}
    f_{0} &\text{ if $i=0$} \\
    S_{f_{i-1}}L_{i}+f_{i} &\text{ if $i>0$}
  \end{cases}.
\]
For $i=1$, we get
\begin{equation}
\label{E:5.3.7}
  A_{1,j}=L_{1}A_{0,j}+k_{j} \quad \text{where} \quad k_{j}=S_{j}^{*}f_{1}.
\end{equation}

By (\ref{E:5.3.5}) and the assumptions on the matrix-functions, we note that
the system (\ref{E:5.3.7}) may now be solved with the Euclidean algorithm:
\begin{equation}
\label{E:5.3.8}
  \begin{cases}
    L_{1}=quot(A_{0,j}, A_{1,j}) \\
    k_{j}=rem(A_{0,j}, A_{1,j})
  \end{cases}
\end{equation}
with the same polynomial $L_{1}$ for $j=0,1, \cdots, N-1$.

For the polynomial function $f_{1}$ we then have
\begin{equation}
\label{E:5.3.9}
  f_{1}=\sum_{j=0}^{N-1}S_{j}k_{j};
\end{equation}
i.e.
\[
  f_{1}(z)=k_{0}(z^{N})+k_{1}(z^{N})z+\cdots+k_{N-1}(z^{N-1})z^{N-1}.
\]

The process now continues recursively until all the functions
$L_{1}, L_{2}, \cdots, f_{1}, f_{2}, \cdots$ have been determined.

\textbf{Step 4:} The formula (\ref{E:5.3.6}) translates into a
matrix-factorizations as follows: With $L$ and $F$ determined in
(\ref{E:5.3.6}), we get
\begin{equation}
\label{E:5.3.10}
  A=\mathcal{L}_{N}(L)B
\end{equation}
as a simple matrix-product taking $B=(B_{i,j})$ and
\begin{equation}
\label{E:5.3.11}
  B_{i,j}=S_{j}^{*}f_{i},
\end{equation}
where we used Lemmas \ref{L:3.10} and \ref{L:4.1}.

\textbf{Step 5:} The process now continues with the polynomial matrix-function
from (\ref{E:5.3.10}) and (\ref{E:5.3.11}).  We determine polynomials
$U_{1}, \cdots, U_{N-1}$ and a third matrix function
\[
  C=(C(z))=(C_{i,j}(z)) \quad \text{such that} \quad B=\mathcal{U}_{N}(U)C.
\]

\textbf{Step 6:} As each step of the process we alternate $L$ and $U$; and
at each step, the degrees of the matrix-functions is decreased.  Hence the
recursion must terminate as stated in Theorem \ref{T:5.3.1}.
\end{proof}

\subsection{$L^{\infty}(\mathbb{T})$-matrix entries.}
\label{sec:3.1}
While the case $N=2$ is motivated by application to the high-pass v.s.
low-pass filters, may result for the $N=2$ case carry over.  To see this, we
first define the Cuntz-algebra $\mathcal{O}_{N}$ in general the relations are
\begin{equation}
\label{eq:3.1.1}
  S_{i}^{*}S_{j}=\delta_{i, j}I, \quad \sum_{i}S_{i}S_{i}^{*}=I,
\end{equation}
when the elements $(S_{i})_{i=0}^{N-1}$ are given symmetrically.

Each case (\ref{eq:3.1.1}) has many representations; for example if
$(m_{i}(z))_{i=0}^{N-1}$, $z \in \mathbb{T}$, is a system of filters
corresponding to $N$ frequency bands, we may obtain a representation of
$\mathcal{O}_{N}$ acting on the Hilbert space $L^{2}(\mathbb{T})$ as
follows
\begin{equation}
\label{eq:3.1.2}
  (S_{i}\psi)(z)=m_{i}(z)\psi(z^{N}), \quad \forall z \in \mathbb{T},
  \quad \psi \in L^{2}(\mathbb{T}).
\end{equation}

For $i \in \{0, 1, \cdots, N-1 \}$, the adjoint operator of $S_{i}$ in
(\ref{eq:3.1.2}) is
\begin{equation}
\label{eq:3.1.3}
  (S_{i}^{*}\psi)(z)=\frac{1}{N}\sum_{w^{N}=z}\overline{m_{i}(z)}\psi(z^{N}),
  \quad z \in \mathbb{T}.
\end{equation}
A direct verification shows that the Cuntz-relation (\ref{eq:3.1.1}) are
satisfied for the operators $(S_{i})_{i=0}^{N-1}$ in (\ref{eq:3.1.2}) if
and only if the system $(m_{i})_{i=0}^{N-1}$ is a multi-band filter
covering the $N$ frequency bands.

The simplest example of the representation in (\ref{eq:3.1.2}) is the case
where $m_{i}(z)=z^{i}$, $i=0,1, \cdots, N-1$; and so
\begin{equation}
\label{eq:3.1.4}
  (S_{i}\psi)(z)=z^{i}\psi(z^{N}), \quad i=0,1, \cdots, N-1,
  \quad z \in \mathbb{T}, \quad \psi \in L^{2}(\mathbb{T}).
\end{equation}

\begin{theorem}
\label{T:3.1.1}
Let $g=(g_{ij})_{i,j=0}^{N-1}\in SL_{N}(L^{\infty}(\mathbb{T})$, i.e.,
$g_{ij}(\cdot) \in L^{\infty}(\mathbb{T}))$, and
\begin{equation}
\label{eq:3.1.5}
  det g(\cdot)\equiv 1 \quad \text{ on $\mathbb{T}$}
\end{equation}
then for every factorization
\begin{equation}
\label{eq:3.1.6}
  g(z)=
  \begin{pmatrix}
    1 & 0 & 0 & \cdots & 0 \\
    L_{1}(z) & 1 & 0 & \cdots & 0 \\
    L_{2}(z) & 0 & 1 & \ddots & 0 \\
    \vdots & \vdots & \vdots & \ddots & \vdots \\
    \vdots & \vdots & \vdots & \ddots &\vdots \\
    L_{N-1}(z) & 0 & \cdots & 0 & 1
  \end{pmatrix}
  g^{(new)}(z) \quad \text{(matrix product)}
\end{equation}
there is a unique $f_{i} \in L^{\infty}(\mathbb{T})$ such that
\begin{equation}
\label{eq:3.1.7}
  g_{0,j}^{(new)}(z)=g_{0,j}(z), \quad \text{and}
\end{equation}
\begin{equation}
\label{eq:3.1.8}
  g_{i,j}^{(new)}(z)=S_{j}^{*}f_{i}, \quad \text{for $i=1,2, \cdots, N-1$}
\end{equation}
where $\{S_{j}\}_{j=0}^{N-1}$ is the system of Cuntz-isometries from
(\ref{eq:3.1.4}).
\end{theorem}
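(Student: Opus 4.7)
The plan is to carry out the matrix multiplication $L(z)\, g^{(new)}(z)$ entry by entry, read off what the rows of $g^{(new)}$ must be, and then invoke the Cuntz relations (\ref{eq:3.1.1}) from Lemma \ref{L:3.10} to manufacture the functions $f_i$ from those rows. Because the prescribed lower-triangular factor in (\ref{eq:3.1.6}) has only the first column nontrivial below the diagonal, the entire computation collapses to one elementary row-operation, and the conclusion follows from the orthonormal/partition-of-unity identities satisfied by the isometries $S_j$.

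First I would inspect the matrix product. The top row of the lower-triangular factor is $(1,0,\dots,0)$, so immediately
\[
g_{0,j}(z) = g_{0,j}^{(new)}(z),\qquad j=0,1,\ldots,N-1,
\]
which is (\ref{eq:3.1.7}). For each $i=1,\dots,N-1$, the $i$-th row of the lower-triangular factor is $L_i(z)$ in column $0$ and $1$ in column $i$, so
\[
g_{i,j}(z) = L_i(z)\, g_{0,j}(z) + g_{i,j}^{(new)}(z),
\]
which determines $g_{i,j}^{(new)}$ uniquely as $g_{i,j}-L_i g_{0,j}$. In particular, since $g_{i,j}$, $g_{0,j}$, and $L_i$ all lie in $L^\infty(\mathbb{T})$, so do the entries of $g^{(new)}$.

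Next I would produce the $f_i$. Following the recipe in Lemma \ref{L:4.1}, for each $i\geq 1$ I define
\[
f_i(z) := \sum_{j=0}^{N-1} \bigl(S_j\, g_{i,j}^{(new)}\bigr)(z) = \sum_{j=0}^{N-1} z^j\, g_{i,j}^{(new)}(z^N).
\]
This is a finite sum of products of bounded functions, hence $f_i \in L^\infty(\mathbb{T})$. Applying $S_j^*$ and using $S_j^* S_k = \delta_{j,k} I$ from (\ref{eq:3.1.1}) yields
\[
S_j^* f_i = \sum_{k=0}^{N-1} S_j^* S_k\, g_{i,k}^{(new)} = g_{i,j}^{(new)},
\]
which is precisely (\ref{eq:3.1.8}). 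Uniqueness of $f_i$ is the only remaining point: if $\widetilde{f}_i\in L^\infty(\mathbb{T})$ also satisfies $S_j^*\widetilde{f}_i = g_{i,j}^{(new)}$ for every $j$, then the completeness relation $\sum_j S_j S_j^* = I$ from (\ref{eq:3.1.1}) forces $\widetilde{f}_i = \sum_j S_j S_j^* \widetilde{f}_i = \sum_j S_j\, g_{i,j}^{(new)} = f_i$.

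I do not foresee a serious obstacle: the determinant hypothesis (\ref{eq:3.1.5}) is only used implicitly, to ensure $g^{(new)}\in SL_N(L^\infty(\mathbb{T}))$ (its entries are already $L^\infty$ by construction, and $\det g^{(new)} = \det g = 1$ because the lower-triangular factor has determinant $1$). The only place where one has to be slightly attentive is verifying that $f_i$ really lands in $L^\infty(\mathbb{T})$ rather than merely $L^2(\mathbb{T})$; this is handled because both $z\mapsto z^j$ and $z\mapsto g_{i,j}^{(new)}(z^N)$ are bounded measurable functions on $\mathbb{T}$, the latter because the $N$-fold covering $z\mapsto z^N$ preserves $L^\infty$.
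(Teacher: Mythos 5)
Your proposal is correct and follows essentially the same route as the paper: both reduce the factorization to the single row-operation $g^{(new)}_{i,j}=g_{i,j}-L_i g_{0,j}$, define $f_i=\sum_j S_j g^{(new)}_{i,j}$ (the paper phrases this via $g^{(new)}(z^N)$ acting on the vector $(1,z,\dots,z^{N-1})^{T}$, which is the same thing by Lemma \ref{L:4.1}), and extract existence and uniqueness from the Cuntz relations $S_j^*S_k=\delta_{j,k}I$ and $\sum_j S_jS_j^*=I$. Your explicit check that $f_i$ lands in $L^\infty(\mathbb{T})$ and your spelled-out uniqueness step are minor additions the paper leaves implicit.
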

\begin{proof}
With the arguments above, in the space $\mathcal{O}_{N}$ of $N=2$, we now
get matrix, the system:
\begin{equation}
\label{eq:3.1.9}
  g^{(new)}(z^{N})
  \begin{pmatrix}
  1 \\
  z \\
  z^{2} \\
  \vdots \\
  z^{N-1}
  \end{pmatrix}
  =
  \begin{pmatrix}
  f_{0}(z) \\
  f_{1}(z) \\
  f_{2}(z) \\
  \vdots \\
  f_{N-1}(z)
  \end{pmatrix},
\end{equation}

\begin{equation}
\label{eq:3.1.10}
  \begin{cases}
    S_{j}^{*}f_{0}=g_{0,j}, \\
    L_{i}g_{0,j}+S_{j}^{*}f_{i}=g_{i,j},
  \end{cases}
\end{equation}
and
\begin{equation}
\label{eq:3.1.11}
  f_{i}=\sum_{j=0}^{N-1}S_{j}S_{j}^{*}f_{i}
  =\sum_{j=0}^{N-1}S_{j}(g_{i,j}-L_{i}g_{0,j})
\end{equation}
for $i=1,2, \cdots, N-1$, which is desired conclusion.
\end{proof}

\subsection{Optimal factorization in the case of
$SL_{N}(L^{\infty}(\mathbb{T}))$}
\label{sec:3.2}
Fix $N>2$, and consider the usual inner product in $\mathbb{C}^{N}$,
\begin{equation}
\label{eq:3.2.1}
  \langle z, w \rangle:=\sum_{j=0}^{N-1}\overline{z_{j}}w_{j},
\end{equation}
defined for all $z=(z_{0}, \cdots, z_{N-1})$, and $w=(w_{0}, \cdots, w_{N-1})$.

For $g=(g_{ij}(z))_{i,j=0}^{N-1} \in SL_{N}(L^{\infty}(\mathbb{T}))$, set
\[
  \tilde{g_{0}}(z)=(g_{0j}(z))_{j=0}^{N-1}, \quad a.e.,
\]
the first row in the matrix-function
$\mathbb{T} \ni Z \mapsto (g(z)) \in SL_{N}(L^{\infty}(\mathbb{T}))$.  Let
$P(z)=P^{(g)}(z)$ denote the projection of $\mathbb{C}^{N}$ onto the
one-dimensional subspace generated by $\tilde{g_{0}}(z) \in \mathbb{C}^{N}$.

Note that $(P(z))_{z \in \mathbb{T}}$ is a field of orthogonal rank-$2$
projection in $\mathbb{C}^{N}$.  Setting
\begin{equation}
\label{eq:3.2.2}
  \|\tilde{g_{0}}(z)\|_{2}^{2}=\sum_{j=0}^{N-1}|g_{0,j}(z)|^{2},
\end{equation}
we have:
\begin{equation}
\label{eq:3.2.3}
  P(z)\xi=\sum_{j=0}^{N-1}\frac{\overline{g_{0,j}(z)}\xi_{j}}
  {\|\tilde{g_{0}}(z)\|_{2}^{2}}\text{ }g_{0,j}(z)
  \quad \text{for all } \xi=(\xi{0}, \cdots, \xi_{N-1}) \in \mathbb{C}^{N};
\end{equation}
and set
\begin{equation}
\label{eq:3.2.4}
  \tilde{g_{j}}^{(new)}(z)=\tilde{g_{0}}(z)-P(z)\tilde{g_{j}}(z).
\end{equation}

\begin{corollary}
\label{C:3.2.1}
\begin{enumerate} [(i)]
  \item For the factorization (\ref{eq:3.1.6}) in Theorem \ref{T:3.1.1}, the
optimal choice is that given by the matrix-factor $f^{(new)}$ having as rows
the vector fields $\tilde{g_{i}}^{(new)}(z)$ specified in (\ref{eq:3.2.4}).
  \item With the resolution of row-vector fields,
\begin{equation}
\label{eq:3.2.5}
  \tilde{g_{j}}^{(new)}(z)=(S_{0}^{*}f_{i}, S_{1}^{*}f_{i}, \cdots,
  S_{N-1}^{*}f_{i})
\end{equation}
from (\ref{eq:3.1.8}), the optimal solution is attained; and it is the unique
minimizer for the following system of optimization problems:
\begin{equation}
\label{eq:3.2.6}
  min_{f_{i}\in L^{2}(\mathbb{T})}\|f_{i}\|_{L^{2}(\mathbb{T}}^{2}, \quad
  1 \leq i < N,
\end{equation}
where each choice $(f_{i})_{i=1}^{N-1}$ yields a matrix-factor
$\mathcal{A}^{(new)}$ via (\ref{eq:3.2.5}).
\end{enumerate}
\end{corollary}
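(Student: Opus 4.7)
The plan is to reduce the $L^{2}$-minimization of $\|f_{i}\|^{2}$ to a pointwise least-squares problem in $\mathbb{C}^{N}$ at each $z \in \mathbb{T}$, and then identify the unique minimizer with the orthogonal projection of $\tilde{g_{i}}(z)$ away from the line $\mathbb{C}\cdot\tilde{g_{0}}(z)$. The spine of the argument rests on the Cuntz-algebra Parseval identity, which converts an $L^{2}$-norm on $\mathbb{T}$ into a pointwise sum of squares in $\mathbb{C}^{N}$.

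First I would invoke the Cuntz relations $S_{j}^{*}S_{k}=\delta_{j,k}I$ and $\sum_{j}S_{j}S_{j}^{*}=I$ from Lemma \ref{L:3.10} to obtain
$$
\|f_{i}\|_{L^{2}(\mathbb{T})}^{2}
\;=\;\sum_{j=0}^{N-1}\|S_{j}^{*}f_{i}\|^{2}
\;=\;\sum_{j=0}^{N-1}\|g_{i,j}^{(new)}\|^{2},
$$
where the second equality uses (\ref{eq:3.1.8}). Substituting the relation $g_{i,j}^{(new)} = g_{i,j}-L_{i}g_{0,j}$ from (\ref{eq:3.1.10}) and pulling the sum inside the integral then rewrites this as
$$
\|f_{i}\|^{2}\;=\;\int_{\mathbb{T}}\bigl\|\tilde{g_{i}}(z)-L_{i}(z)\,\tilde{g_{0}}(z)\bigr\|_{\mathbb{C}^{N}}^{2}\,dz.
$$
Because the integrand is non-negative and depends only on the scalar $L_{i}(z)$ at each $z$, minimization of $\|f_{i}\|^{2}$ reduces to a pointwise $\mathbb{C}$-valued least-squares problem a.e.\ on $\mathbb{T}$.

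Next, for fixed $z$ with $\tilde{g_{0}}(z)\neq 0$, the classical best-approximation result in $\mathbb{C}^{N}$ shows that $\lambda\mapsto\|\tilde{g_{i}}(z)-\lambda\,\tilde{g_{0}}(z)\|^{2}$ has the unique minimizer
$$
\lambda\;=\;\frac{\langle\tilde{g_{0}}(z),\tilde{g_{i}}(z)\rangle}{\|\tilde{g_{0}}(z)\|_{2}^{2}},
$$
and the minimal residual is $(I-P(z))\tilde{g_{i}}(z)$, which is exactly the vector defined by (\ref{eq:3.2.4}); strict convexity of the quadratic in the real and imaginary parts of $\lambda$ gives the uniqueness. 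The determinant condition $\det g(z)\equiv 1$ rules out $\tilde{g_{0}}(z)=0$ on any set of positive measure, so $L_{i}(z)$ is well defined a.e. Assembling the row vectors via (\ref{eq:3.1.8}) produces (\ref{eq:3.2.5}) and proves (i). For (ii), pointwise uniqueness of $L_{i}(z)$ yields uniqueness of $L_{i}\in L^{\infty}(\mathbb{T})$, and then uniqueness of the $L^{2}$-function $f_{i}$ through the reconstruction formula (\ref{eq:3.1.11}), $f_{i}=\sum_{j}S_{j}(g_{i,j}-L_{i}g_{0,j})$.

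The main technical obstacle I anticipate is not the optimization itself but verifying that $L_{i}$ genuinely lies in $L^{\infty}(\mathbb{T})$ rather than being merely measurable. The numerator $\langle\tilde{g_{0}},\tilde{g_{i}}\rangle$ is essentially bounded because the entries of $g$ lie in $L^{\infty}(\mathbb{T})$, but the reciprocal $1/\|\tilde{g_{0}}(z)\|_{2}^{2}$ requires an essential lower bound on $\|\tilde{g_{0}}(z)\|_{2}$. This has to be extracted from the hypothesis $g\in SL_{N}(L^{\infty}(\mathbb{T}))$: invertibility in the group forces $g^{-1}$ to have $L^{\infty}$ entries, and expanding $\det g\equiv 1$ along the first row against the first column of $g^{-1}$ yields the required uniform lower bound. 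Once this $L^{\infty}$-estimate is in hand, the factorization (\ref{eq:3.1.6}) lies in the stated group and both parts of the corollary follow.
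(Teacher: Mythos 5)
Your argument is correct and follows essentially the same route as the paper: the published proof of this corollary merely defers to the variational computation behind Theorem \ref{T:3.1.1} and the $2\times 2$ prototype in equations (\ref{eq:3.6})--(\ref{eq:3.10}), which is precisely the reduction you carry out --- the Cuntz--Parseval identity $\|f_i\|^2=\sum_{j}\|S_j^*f_i\|^2$ turning the $L^2$ problem into a pointwise least-squares problem in $\mathbb{C}^N$ whose unique solution is the orthogonal projection $(I-P(z))\tilde{g_i}(z)$ of (\ref{eq:3.2.4}) (which, as you implicitly note, has a typo: the first term should be $\tilde{g_j}(z)$, not $\tilde{g_0}(z)$). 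The one point where you go beyond the paper is the essential lower bound on $\|\tilde{g_0}(z)\|_2$ needed to place $L_i$ in $L^{\infty}(\mathbb{T})$ rather than merely making it measurable --- the paper records only the a.e.\ positivity $|A|^2+|B|^2>0$ --- and your cofactor-expansion argument from the boundedness of $g^{-1}$ supplies that bound correctly.
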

\begin{proof}
  The proof of the conclusions in (i)-(ii) in the corollary follows from the
arguments in the proof of Theorem \ref{T:3.1.1} above.
\end{proof}





\end{document}